\newcommand{\om}{\omega}
\newcommand{\bom}{\overline{\omega}}
\newcommand{\som}{\sqrt{\omega}}
\newcommand{\bsom}{\overline{\sqrt{\omega}}}
\newcommand{\mb}{\mathbb}
\newcommand{\R}{\mathbb{Z}[\som^{\pm 1}]}
\newcommand{\krem}{\widetilde{\tau}_D}
\newcommand{\vtex}{v}
\newtheorem{thm}{Theorem}[section]
\newtheorem{cor}[thm]{Corollary}
\newtheorem{defn}[thm]{Definition}
\newcommand{\theoremname}{Theorem:}
\theoremstyle{remark}
\newtheorem{remark}[thm]{Remark}
\newtheorem{ex}[thm]{Example}
\DeclareMathOperator{\lk}{lk}
\DeclareMathOperator{\sign}{sign}
\DeclareMathOperator{\wri}{wr}
\title{A proof of the Kashaev signature conjecture}
\author[J. Liu]{Jessica Liu}
\address{Department of Mathematics\\ University of Toronto\\ Toronto, Ontario, Canada}
\email{jessliu@math.toronto.edu}
\date{September 2023}
\begin{document}

\begin{abstract} In 2018 Kashaev introduced a diagrammatic link invariant conjectured to be twice the Levine-Tristram signature. If true, the conjecture would provide a simple way of computing the Levine-Tristram signature of a link by taking the signature of a real symmetric matrix associated with a corresponding link diagram. This paper establishes the conjecture using the Seifert surface definition of the Levine-Tristram signature on the disjoint union of an oriented link and its reverse. The proof also reveals yet another formula for the Alexander polynomial.
\end{abstract}
\maketitle
\tableofcontents

\section{Introduction}

The Levine-Tristram signature $\sigma_L: S^1 \to \mb{Z}$ of a link $L$ is a classical invariant that has been actively studied since its introduction in the sixties, yet is still not fully understood. While original definitions by Levine \cite{lev} and Tristram \cite{trist} use Seifert matrices, $\sigma_L$ can also be defined using various pairings on the Alexander module, as well as through 4-dimensional topology. It is closely related to the Alexander polynomial and its roots, and provides lower bounds for many topological invariants such as the slice \cite{kauf} and doubly slice \cite{powell} genus. There is also a connection with the Burau representation \cite{gg}, as well as a multivariable generalization \cite{cim}. A survey of some results regarding the Levine-Tristram signature can be found in \cite{con}.

Through a recent attempt to understand the metaplectic invariants of Goldschmidt-Jones \cite{gj}, Kashaev introduced a link invariant  defined using a simple algorithm on link diagrams which he conjectured also computes the Levine-Tristram signature \cite{kash}. If true, Kashaev's conjecture would provide a diagrammatic way of computing the Levine-Tristram signature using the signature of a real matrix. The purpose of this paper is to present a method of obtaining Kashaev's invariant using the original Seifert surface definition of the Levine-Tristram signature, making evident the relationship between the two and thereby proving Kashaev's conjecture. 

The idea behind the proof is as follows. For an oriented link $L \in S^3$, let $-L$ denote the same link with opposite orientation. By picking a suitable Seifert surface $\Sigma$ for $L \sqcup -L$ and basis for its first homology, one sees that Kashaev's invariant on $L$ is exactly the Levine-Tristram signature of $L \sqcup -L$, which is twice that of $L$.

It is not surprising that Kashaev's definition for signature also yields formulas for the Alexander polynomial $\Delta_L(t)$. While $\Delta_{L\sqcup-L}$ is always 0, observe that $\Delta_{L \# -L}=\Delta_L^2$. A slight modification of the construction for $L \sqcup -L$ produces Seifert matrices for the connect sum $L \# -L$, which gives the same results about signature, but also reveals another method of computing $\Delta_L$. 

The relationship with $\Delta_L$ also appears in recent work by Cimasoni and Ferretti \cite{liv}, where they prove the formula for $\Delta_L$ for links, as well as the Kashaev signature conjecture in the case of definite knots (i.e. knots with Seifert matrix $A$ such that $A+A^T$ is positive or negative definite), by relating Kashaev's invariant to the Kauffman model of the Alexander polynomial ~\cite{kaufalex} and using the Gordon-Litherland description of the Levine-Tristram signature at $-1$ ~\cite{gl}. Following remarks 3.1 and 3.2 in ~\cite{liv}, their work implies that the results in this paper also provide proofs of both the Kauffman model of the Alexander polynomial as well as the Gordon-Litherland formulation of the signature at -1. Furthermore, Kashaev's invariant may be thought of as an extension of the latter to the full Levine-Tristram signature.\\

\emph{Acknowledgements} This paper is a result of conversations with Dror Bar-Natan, who noticed the relationship with the Alexander polynomial and suggested looking at links whose Alexander polynomial is the square of the original. Dror also provided countless useful discussions and insights, as well as code for experimentation and verification. I am extremely grateful for all of his support. I am also thankful to David Cimasoni, Livio Ferretti, and Rinat Kashaev for useful discussions. This work is partially supported by NSERC CGS-D and MS-FSS.\\

\section{Review of the Levine-Tristram signature}

A Seifert surface for an oriented link $L$ is a compact, connected, orientable surface embedded in $S^3$ whose boundary is $L$. Let $\Sigma$ be a Seifert surface for $L$, and consider a regular neighbourhood of $\Sigma$ homeomorphic to $\Sigma \times [-1, 1]$, where $\Sigma$ is identified with $\Sigma \times \{0\}$. Let $i^-: H_1(\Sigma; \mb{Z}) \to H_1(S^3 \setminus \Sigma; \,\mb{Z})$ denote a pushoff in the negative direction, which sends the homology class of a curve $\gamma$ to that of $\gamma \times \{-1\}$. The \emph{Seifert form} on $H_1(\Sigma)$ is \begin{align*}H_1(\Sigma) \times H_1(\Sigma) &\to \mathbb{Z} \\ (a, b) &\mapsto \lk(i^{-1}(a), b)\end{align*} where $\lk$ denotes the linking number. A \emph{Seifert matrix} is any matrix which represents the Seifert form. Given a complex unit $\omega \in S^1$ and real matrix $A$, observe that the matrix $$(1-\om)A - (1-\bom)A^T$$ is Hermitian, and thus has a well-defined \emph{signature}: the number of positive eigenvalues minus the number of negative eigenvalues.

\begin{defn} The Levine-Tristram signature of an oriented link $L \in S^3$ is the map $\sigma_L: S^1 \to \mb{Z}$ given by $$\sigma_L(\omega) = \sign((1-\omega)A + (1-\bom)A^T)$$ where $A$ is any Seifert matrix for $L$ and $\sign$ denotes the signature.
\end{defn}

The signature of a Hermitian matrix does not change if the matrix is enlarged by adding a row and column of zeros, thus the Seifert matrix $A$ in the definition of $\sigma_L$ need not be taken with respect to a basis of $H_1(\Sigma)$, but rather any set of generators. On the other hand, the closely related Alexander polynomial $\Delta_L(t) = \det(t A-A^T)$ becomes zero if additional generators are added, so it is important that $A$ is taken with respect to a basis when considering the Alexander polynomial.\\

\section{Review of Kashaev's invariant}
An oriented link diagram $D$ has the structure of a degree-4 planar graph by viewing the crossings of the diagram as the vertices of the graph. Let $F_D$ denote the faces of $D$ when viewed as a planar graph in this way. Associate to $D$ a symmetric $|F_D| \times |F_D|$ matrix $\tau_D$ with values in $\mb{Z}[x]$ as follows: 

For a vertex $\vtex$, let $\tau_\vtex$ be the $|F_D| \times |F_D|$ matrix which is zero everywhere except in the $4 \times 4$ minor corresponding to the faces adjacent to $\vtex$, where its values are given by the matrix in figure ~\ref{fig:kash}.

\begin{figure}[H]

\begin{picture}(100,70)
\put(-50,0){\includegraphics[width = 2cm]{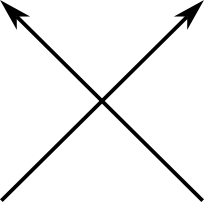}}
\put(-27,5){$f_c$}
\put(-27, 50){$f_a$}
\put(-50, 27){$f_b$}
\put(-5, 27){$f_d$}
\put(50, 30){$\begin{array}{c|cccc} & f_a & f_b & f_c & f_d\\
\hline
f_a&2x^2-1 & x & 1 & x\\
f_b&x&1&x&1\\
f_c&1&x&2x^2-1&x\\
f_d&x&1&x&1\end{array}$}
\end{picture}\vspace*{0.2cm}
\caption{A vertex $v$ and its four adjacent faces, along with the corresponding $4 \times 4$ minor of $\tau_v$}
\label{fig:kash}
\end{figure} 
\begin{defn}
The Kashaev matrix $\tau_D$ of an oriented link diagram $D$ is the following sum over the vertices of $D$: $$\tau_D := \sum_{\vtex} \epsilon(v)\tau_\vtex$$ where $\epsilon(\vtex) = 1$ for a vertex corresponding to a positive crossing and $-1$ for a vertex corresponding to a negative crossing. We call these positive and negative vertices for simplicity.  
\end{defn}

Note that $\tau_D$ is actually a matrix with entries in $\mb{Z}[2x]$ -- the only occurrences of $x$ are in entries corresponding to faces that share an edge, or in entries on the diagonal corresponding to a face with itself. Faces that share an edge always share an even number of vertices, and any face occurs next to a vertex in the position of $f_a$ and $f_c$ in figure ~\ref{fig:kash} an equal number of times. 

Note also that it is possible for two of the surrounding faces of $\vtex$ to be part of the same face, as in the vertex between $f_2, f_5,$ and $f_6$ in example ~\ref{ex:kash}. In this case, $\tau_\vtex$ is nonzero in a $3 \times 3$  minor, and the values of the shared face are given by summing the corresponding values of the two surrounding faces as if they were distinct faces. 

\begin{ex}An example of a diagram $D$ and its Kashaev matrix $\tau_D$: \label{ex:kash}

\begin{picture}(300, 100)
\put(0,0){\includegraphics[width=4cm]{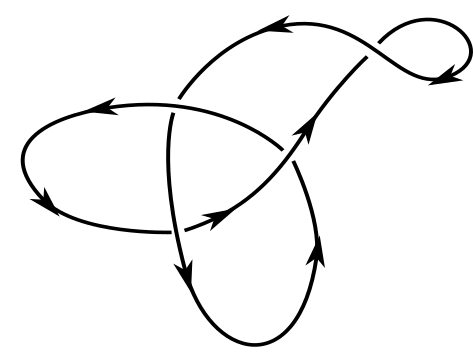}}
\put(20, 45){$f_1$}
\put(57, 20){$f_4$}
\put(47, 45){$f_3$}
\put(60, 65){$f_2$}
\put(98, 72){$f_5$}
\put(90, 30){$f_6$}
\put(145,45){$\begin{array}{c|cccccc}
&f_1&f_2&f_3&f_4&f_5&f_6\\
\hline
f_1&4x^2-2&1&2x&1&0&2x\\
f_2&1&4x^2-3&2x&1&-1&0\\
f_3&2x&2x&3&2x&0&3\\
f_4&1&1&2x&4x^2-2&0&2x\\
f_5&0&-1&0&0&-1&-2x\\
f_6&2x&0&3&2x&-2x&-4x^2+3\end{array}$}
\end{picture}
\end{ex}

While $\tau_D$ is not an invariant of links, its signature is after a correction by the writhe. The following theorem is due to Kashaev. 

\begin{thm}[Kashaev \cite{kash}]\label{thm:ogkash} Let $L$ be an oriented link represented by the diagram $D$, and let $\wri_D$ denote the writhe of $D$. For any real value of $x$, $$\sign(\tau_D) - \wri_D$$ is an invariant of $L$.  
\end{thm}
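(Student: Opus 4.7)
The plan is to realize $\tau_D$ (up to a writhe-valued shift in signature) as a Hermitian form associated to an explicit Seifert surface for the disjoint union $L \sqcup -L$, exactly as hinted in the introduction. Since the Levine-Tristram signature is a link invariant, this will force $\sign(\tau_D) - \wri_D$ to be independent of the choice of diagram representing $L$.

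First I would build a Seifert surface $\Sigma_D$ for $L \sqcup -L$ directly from $D$, in a way that distinguishes faces of $D$. The natural candidate is to place one disk $F_f$ (pushed slightly off the projection plane) for each face $f$ of the diagram, and to attach a band at each crossing $v$ connecting the four adjacent face-disks $F_{f_a}, F_{f_b}, F_{f_c}, F_{f_d}$. The framings, orientations and over/under data of these bands are prescribed by the local picture at $v$, and are chosen precisely so that $\partial \Sigma_D = L \sqcup -L$ when one reads off the two boundary components produced by pushing the interior of $\Sigma_D$ in the two normal directions.

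Next I would read off a natural generating set $\{\gamma_f\}_{f \in F_D}$ for $H_1(\Sigma_D)$, with $\gamma_f$ the loop running around the boundary of the disk $F_f$. The Seifert form in this generating set is local: the pushoff linking $\lk(\gamma_f^-, \gamma_g)$ only picks up contributions at crossings where the boundaries of $F_f$ and $F_g$ interact. So the resulting Seifert matrix $A$ decomposes as a sum over the vertices $v$ of $D$, each summand supported on the $4\times 4$ minor indexed by the faces adjacent to $v$. After forming $(1-\om)A + (1-\bom)A^T$ and performing the change of variables relating $\om$ to the Kashaev parameter $x$ (with $\som,\bsom$ playing the expected role), the local contribution of each positive or negative crossing should match, up to the sign $\epsilon(v)$, the block prescribed in figure~\ref{fig:kash}, so that the full matrix is identified with $\tau_D$.

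The main obstacles are twofold. First, the family $\{\gamma_f\}$ is overcomplete: the sum of the face boundaries is null-homologous in $\Sigma_D$, and degenerate face configurations (as between $f_2,f_5,f_6$ in example~\ref{ex:kash}) produce additional relations. As noted in the remarks after the definition of $\sigma_L$, enlarging a Seifert matrix to a generating set only adds zero eigenvalues, so the signature is unaffected, but the combinatorial bookkeeping has to be done carefully to confirm that the face generators assemble into exactly $\tau_D$ and no correction appears from nullity. Second, and the delicate point, is the origin of the writhe correction: the diagonal contribution of each band carries a framing-dependent self-linking term, and the cumulative effect across all crossings should shift $\sign(\tau_D)$ by precisely $\wri_D$. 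Pinning down this local sign contribution is the technical heart of the argument. Once it is established, $\sign(\tau_D) - \wri_D$ equals $\sigma_{L \sqcup -L}(\om)$, which is an invariant of $L$, proving theorem~\ref{thm:ogkash} (and setting the stage for identifying this quantity with $2\sigma_L(\om)$).
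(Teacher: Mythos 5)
Your overall strategy --- realizing $\tau_D$ through a Seifert surface for $L \sqcup -L$ built from $D$ --- is the right one and is essentially what the paper does, but the mechanism you propose for the writhe correction does not work, and the gap traces back to your choice of generators for $H_1$. The paper's surface (obtained by doubling each crossing of $D$, joining the doubled crossings along the edges, and applying Seifert's algorithm) deformation retracts onto $D$ with every vertex replaced by a small circle, so $H_1$ is generated by one class per \emph{face} and one class per \emph{vertex}. The vertex classes are not optional bookkeeping: with respect to faces-and-vertices the Hermitian matrix $Q=(1-\om)A+(1-\bom)A^T$ has a block form with a diagonal vertex block $Z$ whose entries are $\mp(2-\om-\bom)$ according to the sign of the crossing, so $\sign(Z)=-\wri_D$, and block-diagonalizing gives $\sign(Q)=\sign(X-YZ^{-1}Y^*)+\sign(Z)$. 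The writhe appears as the signature of this extra block, and $\tau_D$ appears as the Schur complement $X-YZ^{-1}Y^*$ (after rescaling each face generator $f$ by $(-\bsom)^{w(f)}$, with $w(f)$ the winding number of $D$ around $f$ --- an ingredient you also omit, without which the local blocks do not match figure~\ref{fig:kash}). Your version keeps only the face generators and asks the diagonal self-linking of the bands to ``shift $\sign(\tau_D)$ by precisely $\wri_D$''; but if the face-indexed Hermitian form were literally $\tau_D$, its signature would be $\sigma_{L\sqcup -L}(\om)=2\sigma_L(\om)$ rather than the required $2\sigma_L(\om)+\wri_D$, and there is no general principle by which altering diagonal entries shifts a signature by a prescribed integer. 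So the ``technical heart'' you defer is not a computation to be pinned down later; it is the point where the argument, as set up, cannot close without reintroducing the vertex generators.

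A second, smaller gap: the identification $2x=\som+\bsom=2\Re(\som)$ only reaches $x\in[-1,1]$, and the block $Z$ is singular at $\om=1$, so the Seifert-surface argument proves invariance only for $x\in(-1,1)$. The theorem claims invariance for all real $x$; the paper obtains the remaining values from Kashaev's original $S$-equivalence argument, which covers $x\neq-\frac{1}{2}$. Your proposal should either restrict its claim accordingly or invoke that complementary argument.
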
 The proof of theorem \ref{thm:ogkash} in \cite{kash} uses a modified notion of $S$-equivalence to show the invariance under Reidemeister moves and requires $x \neq -\frac{1}{2}$. Our interpretation in the next section makes clear that $\sign(\tau_D) - \wri_D$ computes the Levine-Tristram signature when $x \in (-1,1)$, hence it is an invariant for all real $x$.\\ 

\section{From the Levine-Tristram signature to Kashaev's invariant}

This section is dedicated to the construction of Kashaev's invariant using the Levine-Tristram signature. We first state Kashaev's conjecture. 

\begin{thm}[Kashaev's conjecture for signatures]\label{thm:kash}
Given an oriented link $L$ represented by diagram $D$, the Levine-Tristram signature $\sigma_L$ can be computed by $$\sigma_L(\om) =\frac{1}{2}\sign(\tau_D - \wri_D)$$ under the identification  $2x = \som + \bsom = 2\Re(\som)$.
\end{thm}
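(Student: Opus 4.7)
My plan follows the roadmap hinted at in the introduction: I will construct a Seifert surface $\Sigma$ for $L \sqcup -L$ whose first homology carries a natural spanning set indexed by the faces $F_D$ of the diagram, and show that the resulting Hermitian Seifert form matches $\tau_D$ up to a writhe correction. Since the Seifert form of a disjoint union is block diagonal, $\sigma_{L \sqcup -L}(\omega) = 2\sigma_L(\omega)$, so matching the two signatures will yield Kashaev's identity with the factor of $\tfrac{1}{2}$.

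The first step is to build $\Sigma$ explicitly from $D$. Viewing $D$ as a graph on $S^2$, I place a small disk inside $S^3$ hovering over each face $f \in F_D$. These face-disks are then joined across the edges of $D$ by bands, with a local plumbing at each crossing dictated by the over/under strands. The resulting surface should have boundary equal to two parallel copies of $L$, one pushed slightly to each side of the diagram plane, with opposite induced orientations, so $\partial \Sigma = L \sqcup -L$. The loops $\{e_f\}_{f \in F_D}$ running once around the boundary of each face-disk then generate $H_1(\Sigma)$ as a (redundant) spanning set.

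Next I would compute the Seifert matrix $A = \bigl(\lk(i^-(e_f), e_g)\bigr)_{f,g}$ with respect to this spanning set. The linking number is additive over crossings, so it suffices to compute the local contribution of each crossing $v$ to the pair $(f,g)$ for its four adjacent faces. I expect that after plugging into the Hermitian form $(1-\omega)A + (1-\bar\omega)A^T$ and making the substitution $2x = \sqrt{\omega} + \overline{\sqrt{\omega}}$, the local contribution will exactly reproduce $\epsilon(v)$ times the $4 \times 4$ block of Figure~\ref{fig:kash}. The identities $2x^2 - 1 = \Re(\omega)$ and $x = \Re(\sqrt{\omega})$ make this plausible: both $\omega$ and $\sqrt{\omega}$ can arise naturally in linking-number calculations when the pushoff $i^-(e_f)$ is arranged so that it wraps partially around adjacent faces through the plumbed bands.

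The main obstacle I anticipate is accounting for the writhe correction and the fact that $\{e_f\}$ is a spanning set rather than a basis. The face-loops must satisfy relations, since there are $|F_D| = |V_D| + 2$ of them while $\dim H_1(\Sigma)$ is strictly smaller, and reconciling this overcomplete description with an honest Seifert matrix on a basis will introduce stabilizations that leave the signature invariant, together with a residual contribution arising from the local self-intersection pattern at each crossing. I expect this residual to contribute $\epsilon(v)$ to the signature at each vertex $v$, summing to exactly $\wri_D$; carefully tracking the congruence transformations that extract this term, and verifying compatibility with the substitution $2x = 2\Re(\sqrt{\omega})$, will be the most delicate part of the argument.
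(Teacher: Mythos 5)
Your high-level strategy (pass to $L \sqcup -L$, build a Seifert surface from the diagram, compute the form locally at crossings) is the same as the paper's, but as written the proposal has two genuine gaps that the paper's proof exists precisely to fill. First, the face-loops alone cannot span $H_1(\Sigma)$ for any surface of this kind: the surface obtained by applying Seifert's algorithm to the doubled diagram deformation retracts onto $D$ with each crossing replaced by a circle, so $H_1$ has rank $2|V_D|+1$ (for connected $D$), while there are only $|V_D|+2$ faces. The missing generators are the vertex circles themselves, and they are not an inconvenience to be stabilized away --- they are where the writhe comes from. The Seifert form in the face-plus-vertex spanning set has block shape with a diagonal vertex block $Z$ whose entries are $\mp(2-\om-\bom)$, so $\sign(Z)=-\wri_D$, and the face block one actually needs is the Schur complement $X-YZ^{-1}Y^*$ after the congruence that splits off $Z$. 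Your plan to extract $\wri_D$ from a vaguely described ``residual contribution arising from the local self-intersection pattern'' has no mechanism behind it; without the vertex generators there is nothing to produce that term, and the face-only matrix would not even have the right size relative to a basis of $H_1(\Sigma)$.

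Second, your expectation that the local contribution of a crossing to $(1-\om)A+(1-\bom)A^T$ ``exactly reproduces $\epsilon(v)$ times the $4\times4$ block'' of Kashaev's matrix is false as stated: even after taking the Schur complement over the vertex variables, the local $4\times 4$ block is a genuinely complex Hermitian matrix (with entries such as $-\tfrac{1+\bom}{2}$, $\om$, $-\bom$ in asymmetric positions), not the real symmetric Kashaev block. The paper needs one more idea: rescale each face generator $f$ by $(-\bsom)^{w(f)}$, where $w(f)$ is the winding number of $D$ around $f$. This unitary diagonal congruence preserves the signature and is exactly what converts the Hermitian local blocks into the real symmetric blocks of Figure~\ref{fig:kash} under $2x=\som+\bsom$. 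Without both the Schur complement over the vertex circles and this winding-number rescaling, the identification of the Seifert-form signature with $\sign(\tau_D)-\wri_D$ does not go through, so the proposal as it stands does not constitute a proof.
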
 

The structure the proof is as follows. Construct a Seifert surface for $L \sqcup -L$ whose first homology is generated by classes of curves corresponding to the faces and vertices of $D$. Using a Seifert matrix $A$ with respect to these generators, we see that $Q = (1-\om)A + (1-\bom)A^T$ is congruent to a block diagonal matrix with two blocks where one block corresponds to vertices and has signature $-\wri(D)$ and the other block corresponds to faces and, with a scaling of the generators, is exactly $\tau_D$. Since $\sign(Q) = \sigma_{L \sqcup -L} = 2\sigma_L$, the proof is complete.

\begin{proof} We start with the construction of a Seifert surface for $L \sqcup -L$. From a diagram $D$ of $L$, draw a diagram for $L \sqcup -L$ as follows:
\begin{enumerate}
\item At each crossing of $D$, draw a corresponding crossing for $-L$ a bit ``above and behind'' the existing crossing in $D$, as in figure ~\ref{fig:aboveandbehind}.
\begin{figure}[h!]
\begin{subfigure}{0.3\textwidth}
\centering
\begin{picture}(100,35)
\put(0,0){\includegraphics[width=1cm]{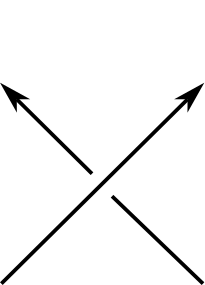}}
\put(35,10){$\longrightarrow$}
\put(60,0){\includegraphics[width=1cm]{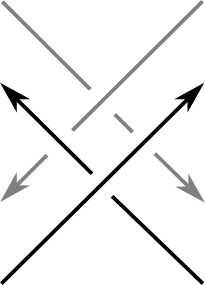}}
\end{picture}
\caption{A positive crossing}
\end{subfigure}
\hspace{2cm}
\begin{subfigure}{0.3\textwidth}
\centering
\begin{picture}(100,30)
\put(0,0){\scalebox{-1}[1]{\includegraphics[width=1cm]{Lcross.png}}}
\put(35,10){$\longrightarrow$}
\put(60,0){\scalebox{-1}[1]{\includegraphics[width=1cm]{negLcross.png}}}
\end{picture}
\caption{A negative crossing}
\end{subfigure}
\caption{A crossing of $D$ (black) with the corresponding crossing (grey) for $-L$}
\label{fig:aboveandbehind}
\end{figure}

\item Connect the new crossings with edges that follow along the corresponding edges in $D$, possibly creating an extra crossing between $L$ and $-L$ along each edge of $D$. See figure ~\ref{fig:LsqcupL} for an example.

\begin{figure}[H]
\centering
\begin{picture}(400,90)
\put(0,0){\includegraphics[width=4cm]{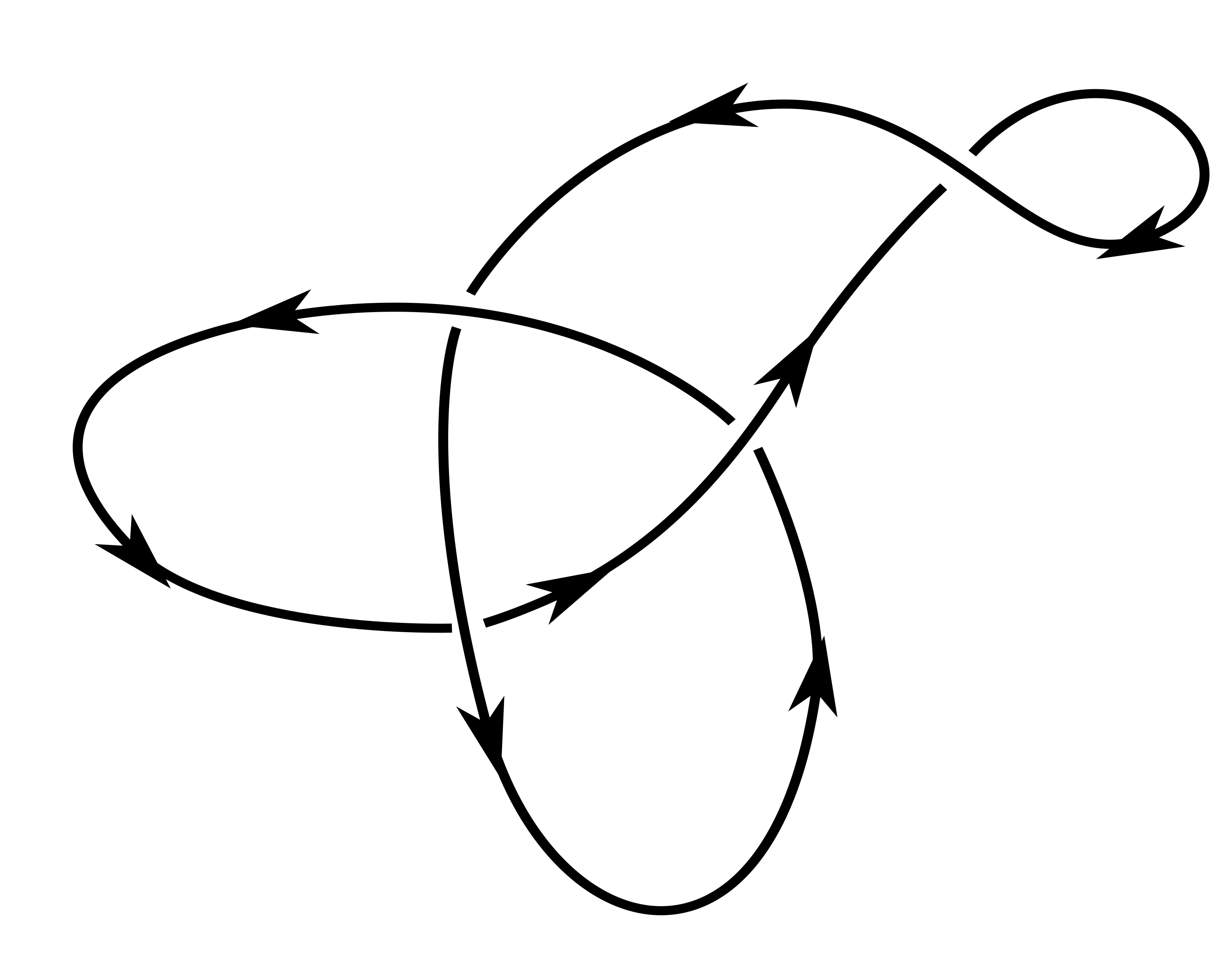}}
\put(100,40){$\longrightarrow$}
\put(130,0){\includegraphics[width=4cm]{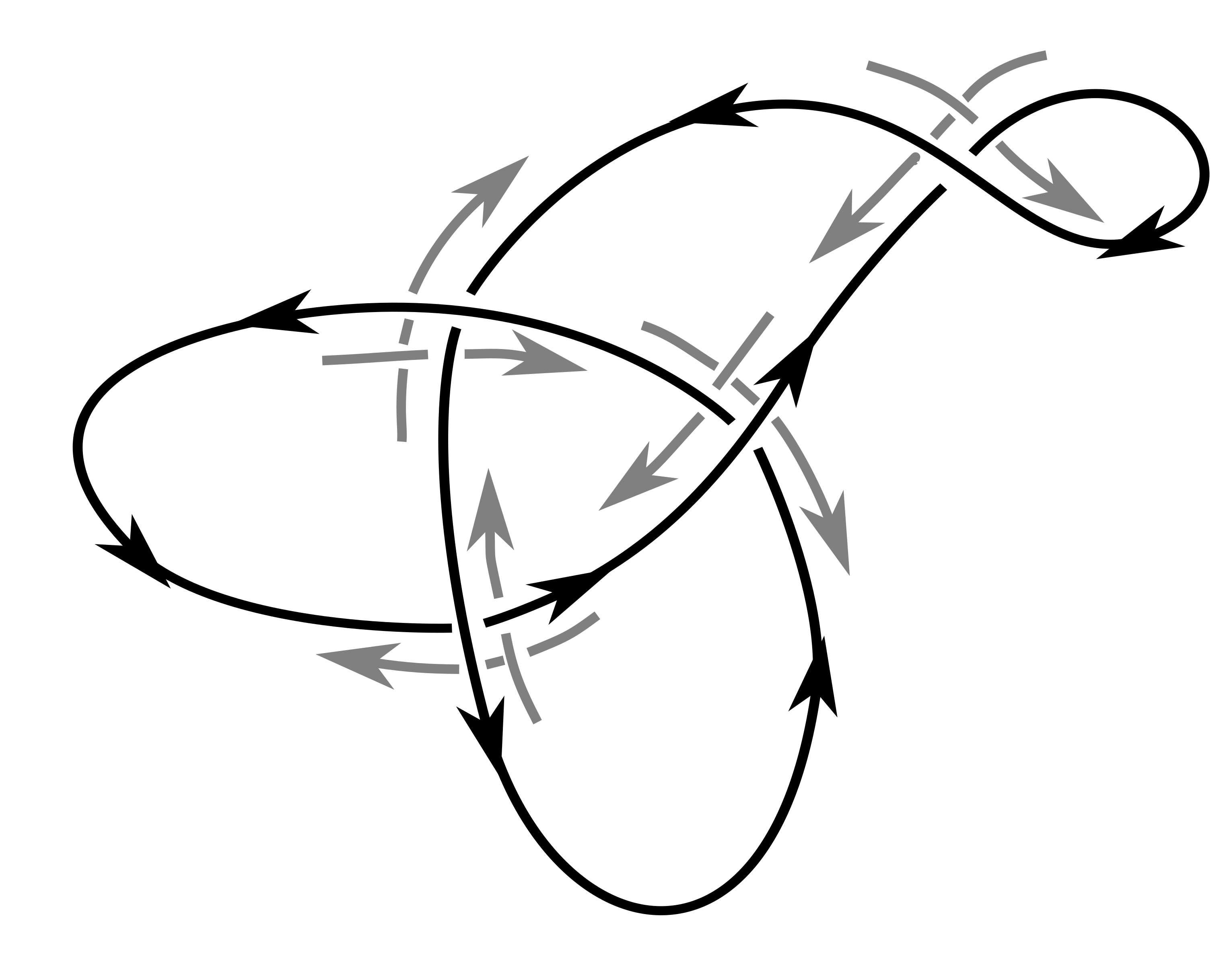}}
\put(230,40){$\longrightarrow$}
\put(260,0){\includegraphics[width=4cm]{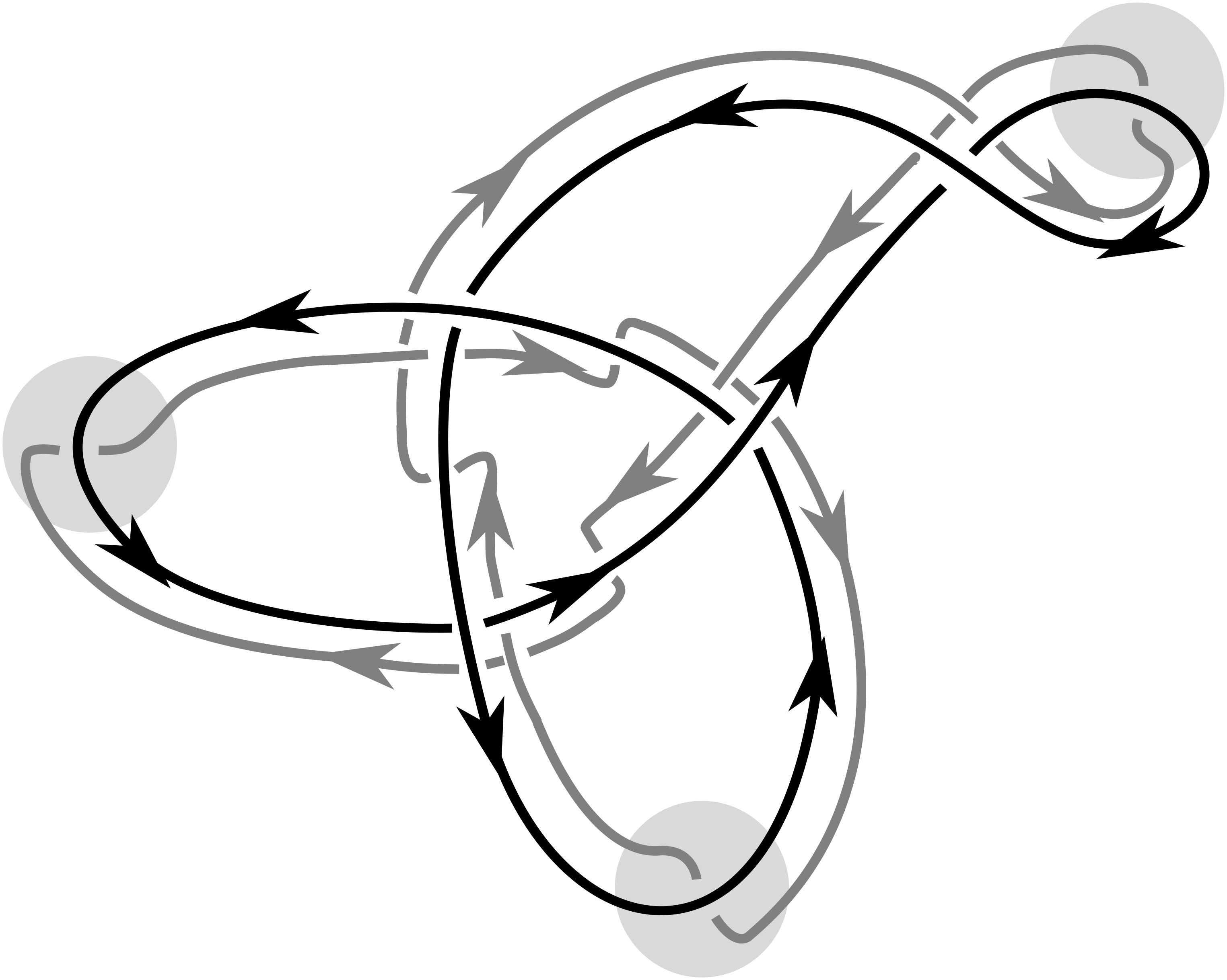}}
\end{picture}
\caption{A diagram for a trefoil $L$ and the corresponding diagram for $L\sqcup -L$. Three extra crossings occur along the edges of the original diagram, highlighted in light grey.}
\label{fig:LsqcupL}
\end{figure}
\end{enumerate}

Applying Seifert's algorithm to the resulting diagram of $L \sqcup -L$ yields a Seifert surface $\Sigma$ that deformation retracts onto a copy of $D$ where each vertex is replaced by a circle, as in figure ~\ref{fig:Seifert}, so the faces and vertices of $D$ correspond to a set of generators for $H_1(\Sigma)$. 

\begin{figure}[H]
\begin{picture}(350,130)
\put(0,0){\includegraphics[width=7cm]{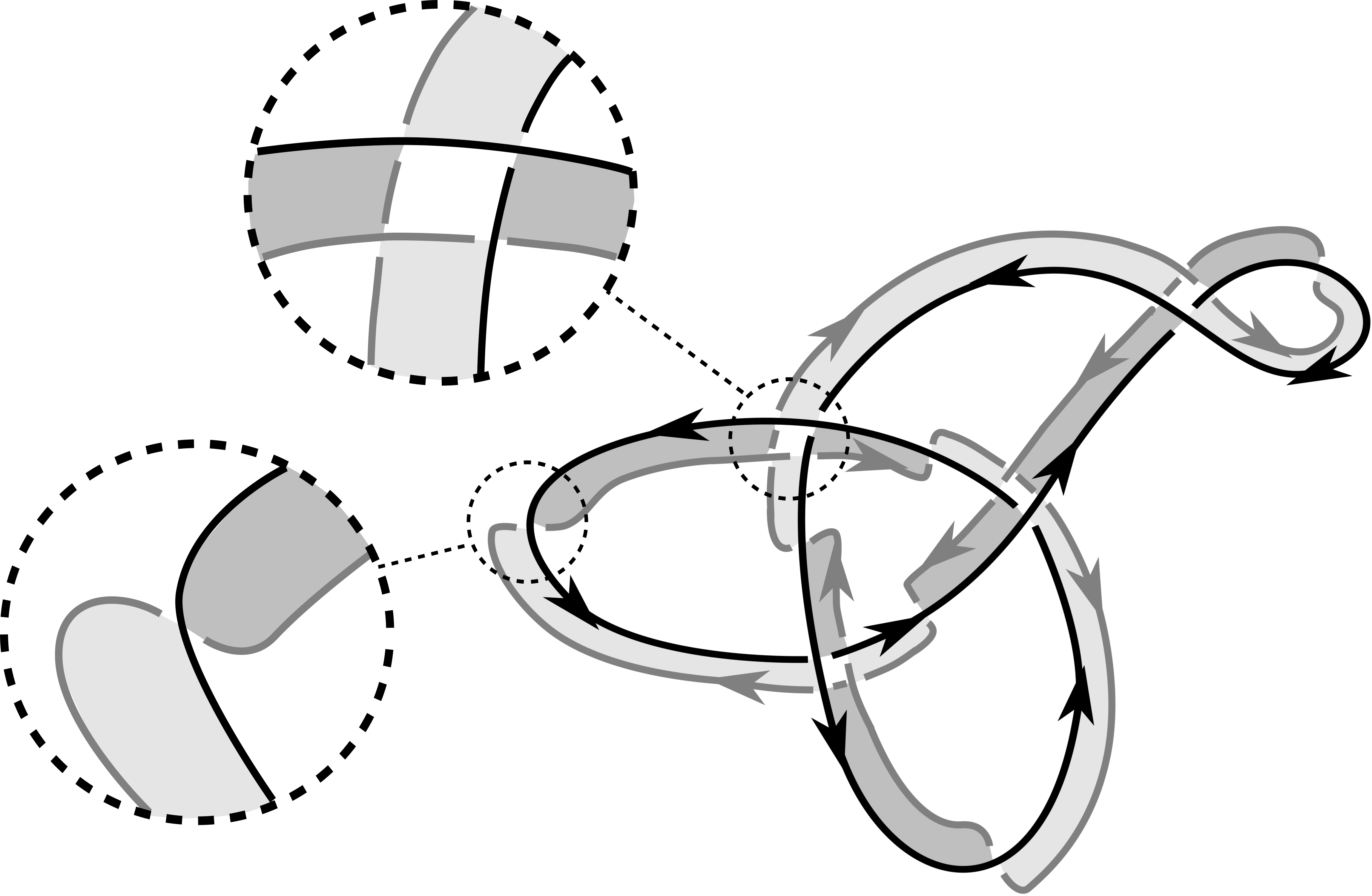}}
\put(210,40){$\longrightarrow$}
\put(240,0){\includegraphics[width=4cm]{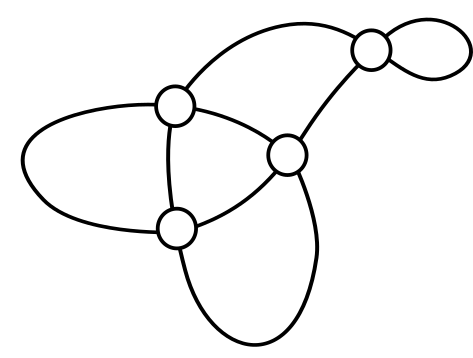}}
\end{picture}
\caption{A Seifert surface for $L \sqcup -L$ when $L$ is a trefoil, which deformation retracts onto a diagram of the trefoil, with vertices replaced by circles.}
\label{fig:Seifert}
\end{figure}

Let $\Sigma$ be a Seifert surface for $L \sqcup -L$ constructed as above from a diagram $D$, and let $A$ be a Seifert matrix with respect to the faces and vertices of $D$. Near a vertex of $D$ there are five homology classes: one for each surrounding face, and one for the vertex. We can consider all five at once using a single picture as in figure ~\ref{fig:hom}, where some segments in the picture are viewed as parts of different homology classes depending on the context. For example, in figure ~\ref{fig:hom} the top edge of the square in \ref{fig:homall} can be viewed as a part of the curve corresponding to the upper face (\ref{fig:homa}) or as a part of the curve corresponding to the vertex (\ref{fig:homx}). 

\begin{figure}[h!]
\begin{subfigure}[t]{0.3\textwidth}
\centering \includegraphics[trim = 0 110 0 110, clip,width = 2.5cm]{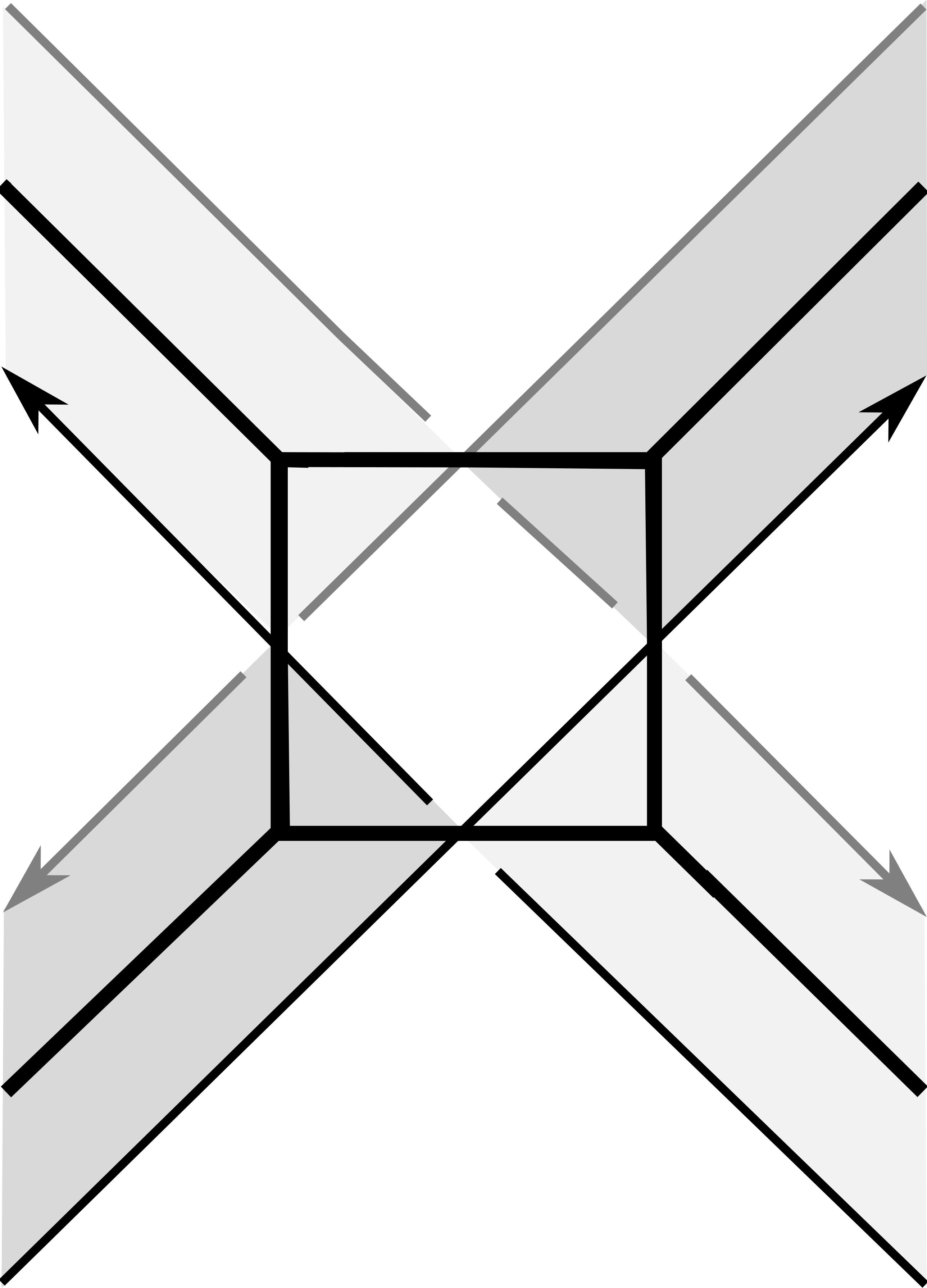}
\caption{All five homology curves near a vertex represented in one picture}
\label{fig:homall}
\end{subfigure}
\hspace*{0.5cm}
\begin{subfigure}[t]{0.3\textwidth}
\centering\includegraphics[trim = 0 110 0 110,clip, width = 2.5cm]{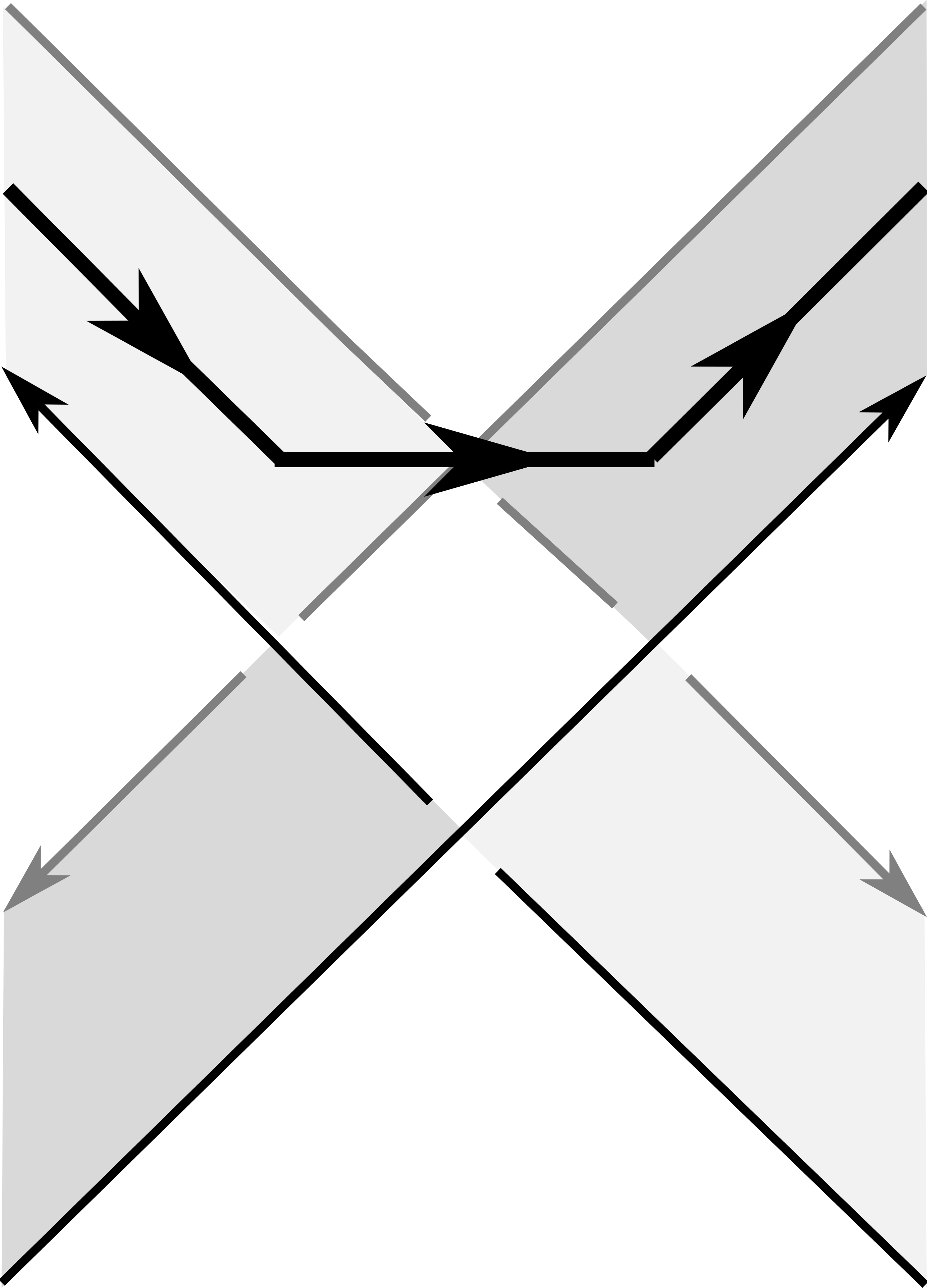}
\caption{Curve representing the homology class corresponding to the upper face}
\label{fig:homa}
\end{subfigure}
\hspace*{0.5cm}
\begin{subfigure}[t]{0.3\textwidth}
\centering\includegraphics[trim = 0 110 0 110,clip,width = 2.5cm]{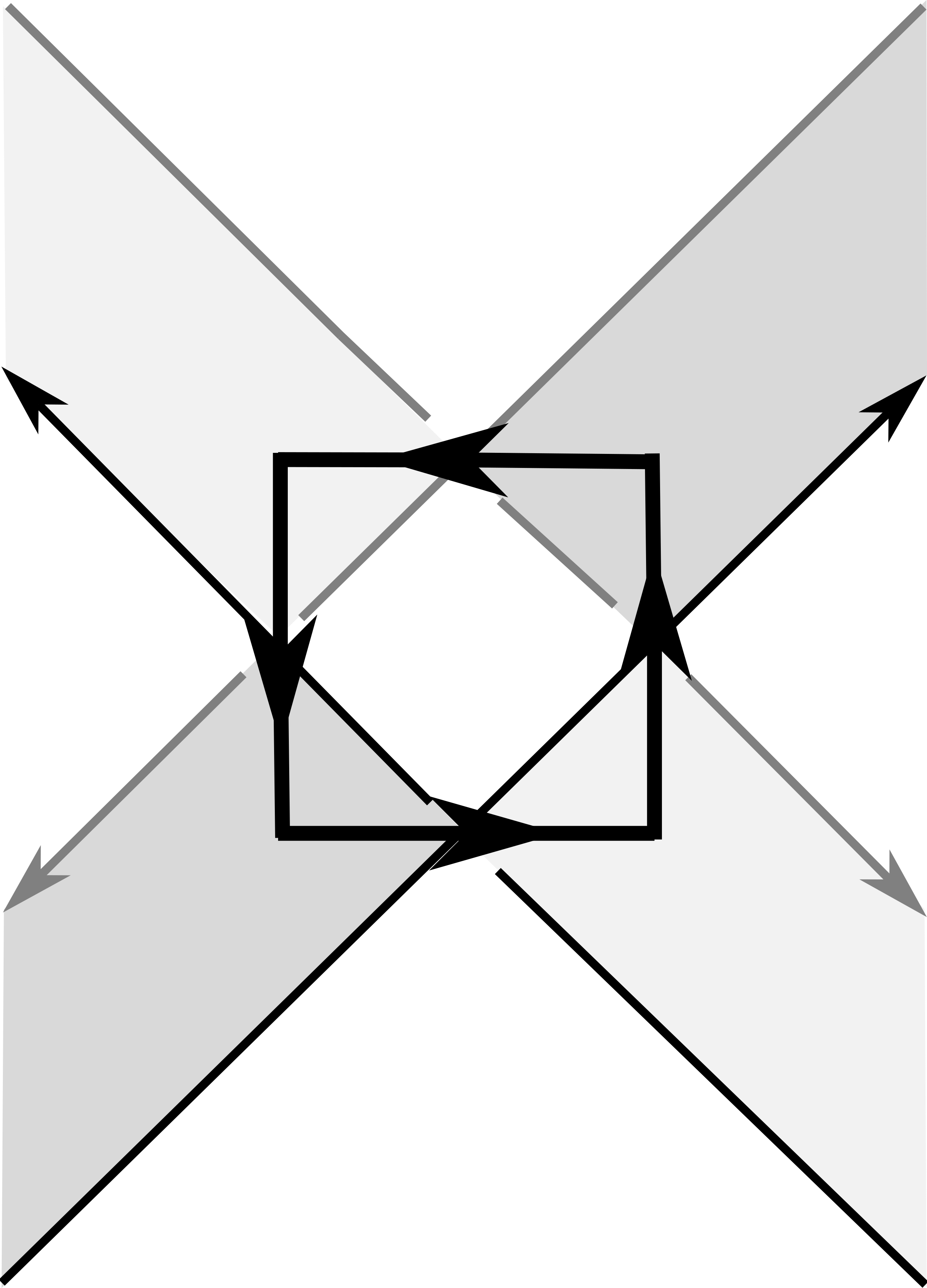}
\caption{Curve representing the homology class corresponding to the vertex}
\label{fig:homx}
\end{subfigure}

\caption{The homology classes (thick black) near a positive vertex of $D$}
\label{fig:hom}
\end{figure} Using the convention that all curves generating $H_1(\Sigma)$ are oriented counterclockwise in the diagram, we compute the local contribution to the Seifert matrix $A$ near each vertex of $D$, see figure ~\ref{fig:link}.

\begin{figure}[H]
\begin{subfigure}{1 \textwidth}
\centering
\begin{picture}(300,100)
\put(10,10){\includegraphics[trim = 0 110 0 110,clip,width = 3cm]{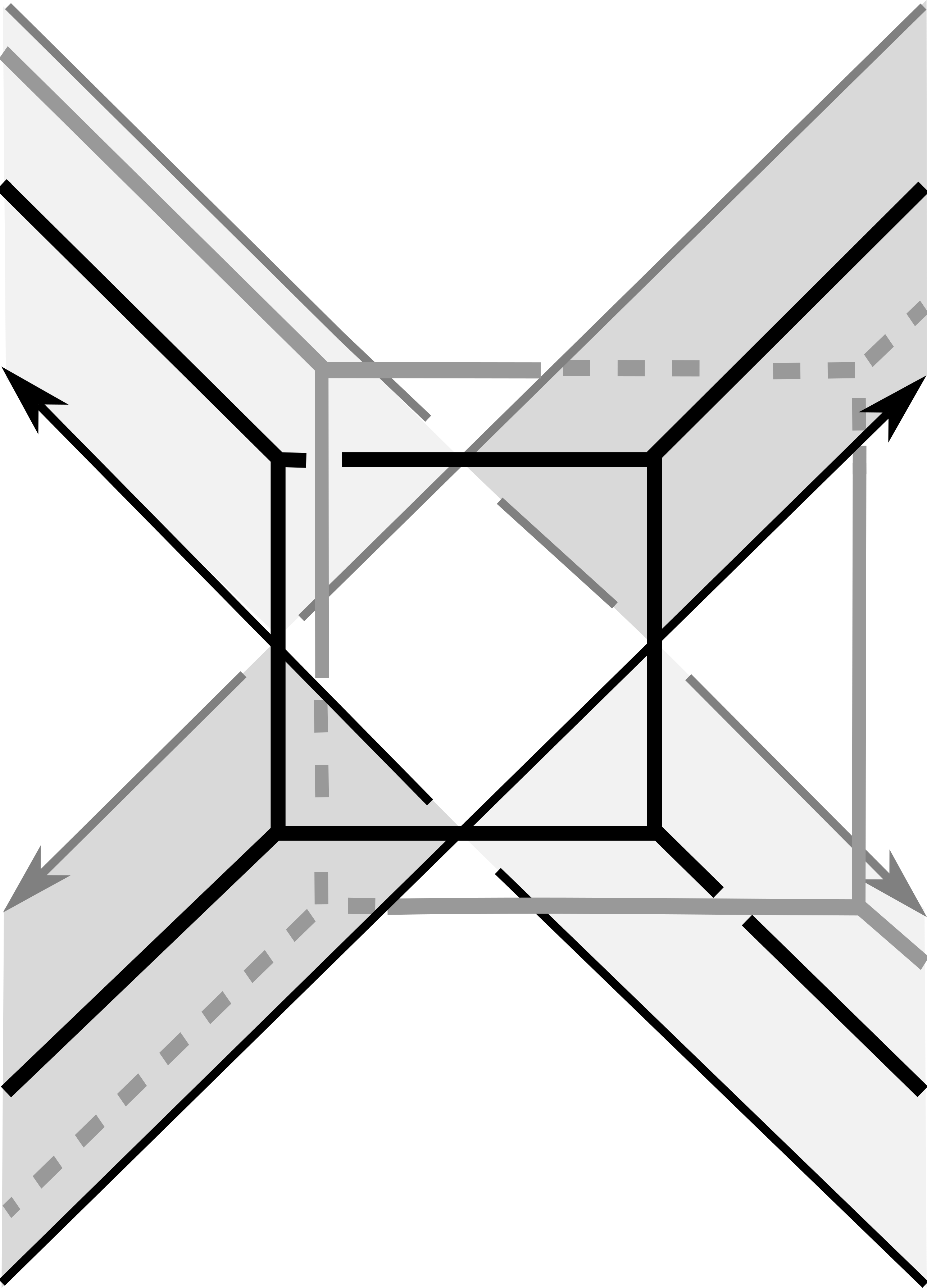}}
\put(47,97){$f_a$}
\put(44,87){\rotatebox[origin=c]{180}{\scalebox{1.5}[1.25]{$\curvearrowleft$}}}

\put(0,51){$f_b$}
\put(10,51){\rotatebox[origin=c]{270}{\scalebox{1.5}[1.25]{$\curvearrowleft$}}}

\put(47,7){$f_c$}
\put(44,15){\scalebox{1.5}[1.25]{$\curvearrowleft$}}

\put(100,51){$f_d$}
\put(93,50){\rotatebox[origin=c]{90}{\scalebox{1.5}[1.25]{$\curvearrowleft$}}}

\put(49,51){$v$}
\put(44,48){\rotatebox[origin=c]{0}{\scalebox{2}{$\circlearrowleft$}}}
\put(130,50){$\begin{array}{c||cccc|c}
\lk & f_a & f_b & f_c & f_d & \vtex\\
\hline
\hline
&&&&&\\[-0.3cm]
i^-(f_a) & -1/2 & 0 & 0 & 1/2 & 0\\
i^-(f_b) & -1/2 & 0 & -1/2 & 0 & 1\\
i^-(f_c) & 0 & 0 & -1/2 & 1/2 & 0\\
i^-(f_d) & 0 & 0 & 0 & 0 & 0\\
\hline
&&&&&\\[-0.3cm]
i^-(\vtex) &1 & 0 & 1 & -1 & -1
\end{array}$}
\end{picture}
\caption{Contribution to $A$ near a positive vertex}
\end{subfigure}
\end{figure}

\begin{figure}[H]
\ContinuedFloat
\begin{subfigure}{1 \textwidth}
\centering
\begin{picture}(300,100)
\put(10,10){\includegraphics[trim = 0 110 0 110, clip, width = 3cm]{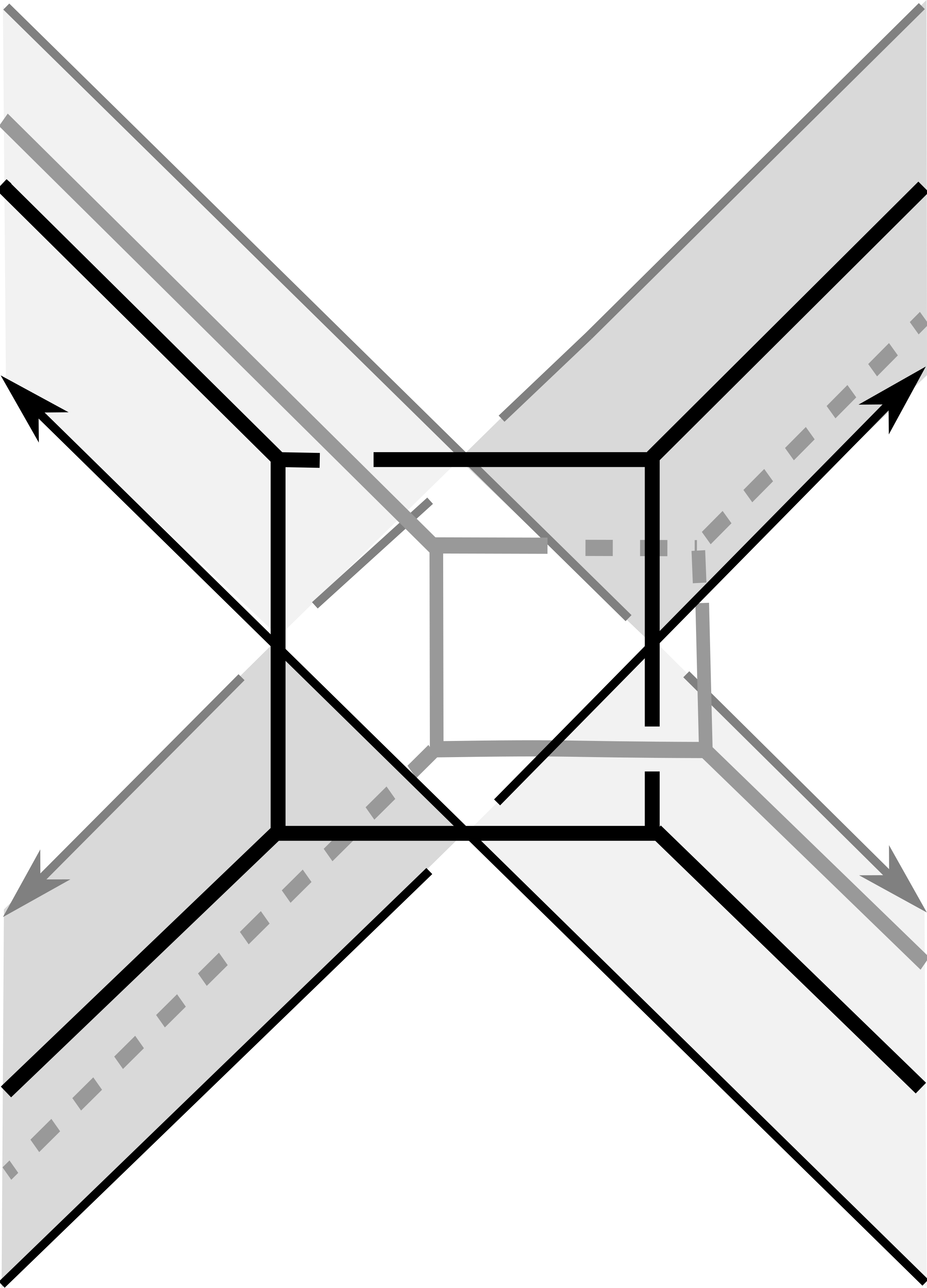}}
\put(47,97){$f_a$}
\put(44,87){\rotatebox[origin=c]{180}{\scalebox{1.5}[1.25]{$\curvearrowleft$}}}

\put(0,51){$f_b$}
\put(10,51){\rotatebox[origin=c]{270}{\scalebox{1.5}[1.25]{$\curvearrowleft$}}}

\put(47,7){$f_c$}
\put(44,15){\scalebox{1.5}[1.25]{$\curvearrowleft$}}

\put(100,51){$f_d$}
\put(93,50){\rotatebox[origin=c]{90}{\scalebox{1.5}[1.25]{$\curvearrowleft$}}}

\put(55,51){$v$}
\put(51,49){\rotatebox[origin=c]{0}{\scalebox{1.7}{$\circlearrowleft$}}}
\put(130, 50){$
\begin{array}{c||cccc|c}
\lk & f_a & f_b & f_c & f_d & \vtex\\
\hline
\hline
&&&&&\\[-0.3cm]
i^-(f_a) & 1/2 & 0 & 0 & 1/2 & -1\\
i^-(f_b) & -1/2 & 0 & -1/2 & 0 & 1\\
i^-(f_c) & 0 & 0 & 1/2 & 1/2 & -1\\
i^-(f_d) & 0 & 0 & 0 & 0 & 0\\
\hline
&&&&&\\[-0.3cm]
i^-(\vtex) &0 & 0 & 0 & -1 & 1
\end{array}$}
\end{picture}
\caption{Contribution to $A$ near a negative vertex}
\end{subfigure}
\caption{Contribution to a Seifert matrix $A$ near a vertex of $D$}
\label{fig:link}
\end{figure}

Away from the vertices of $D$, for example near the border of the diagrams in figure ~\ref{fig:link}, the computation takes the following convention: If the pushout is not visible (dotted thick grey), it is drawn between the homology curve (thick black) and the original diagram for $D$ (thin black), and if the pushout is visible (solid thick grey), it is drawn between the homology curve and the diagram for $-L$ (thin grey). This convention is chosen so that no linking comes from the crossings that occur along the edges of $D$, see figure ~\ref{fig:nolinking}.

\begin{figure}[H]
\centering
\includegraphics[trim = 0 30 0 0, clip, width=1.75cm]{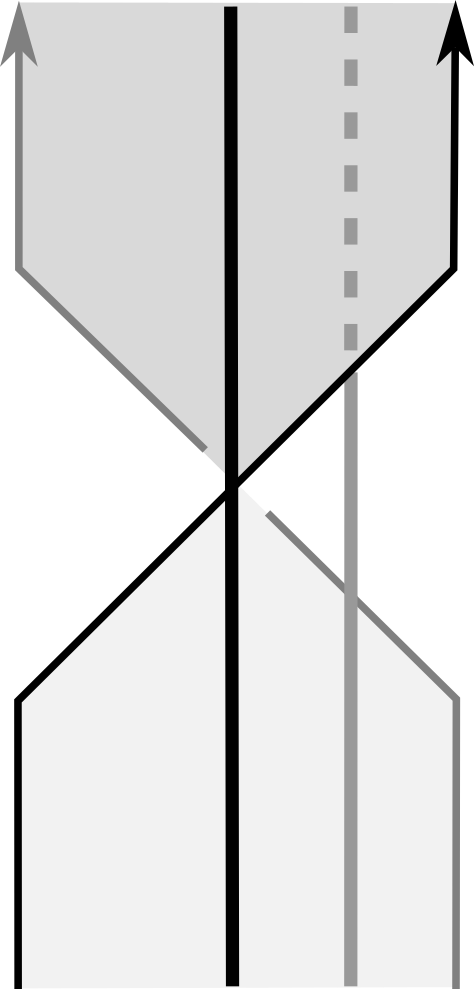}
\hspace*{2cm}
\includegraphics[trim = 0 30 0 0, clip, width=1.75cm]{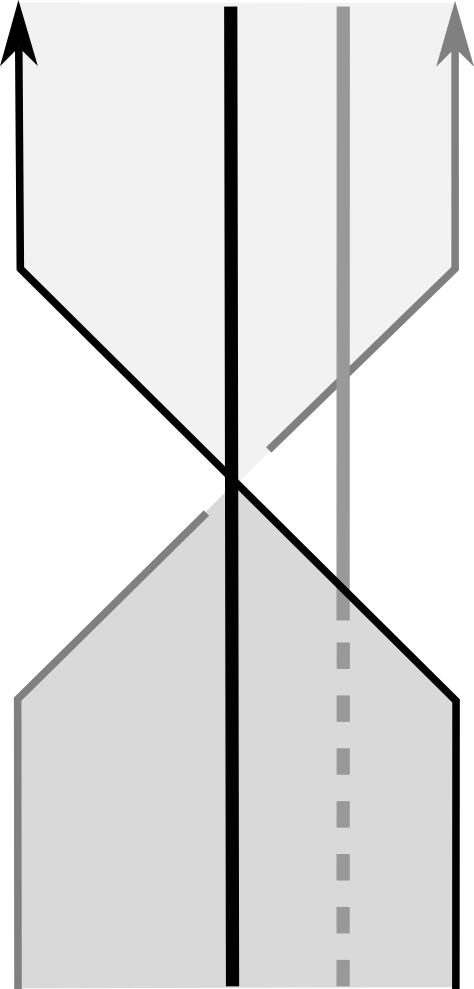}
\caption{No linking from crossings occuring along the edges $D$}
\label{fig:nolinking}
\end{figure}

The Seifert matrix $A$ can be computed by summing over vertices of $D$ using the local contributions given in figure ~\ref{fig:link}. Note that a curve around a vertex does not have any linking with the pushout of a curve around a different vertex, so the matrix $Q = (1-\om)A + (1-\bom)A^T$ has the following form: $$Q=\begin{array}{c||c|c}&\text{faces} & \text{vertices}\\
\hline
\hline
\text{faces} & X & Y \\
\hline
\text{vertices} & Y^* & Z
\end{array}$$ where $Z$ is a diagonal matrix with $-(1-\om) - (1-\bom)$ in the entries corresponding to positive vertices and $1-\om + 1-\bom$ in the entries corresponding to negative vertices. In particular, $Z$ is invertible whenever $\om \neq 1$ and $\sign(Z) = -\wri(D)$. Observe that $Q$ is congruent to the block diagonal matrix $$MQM^* = \begin{pmatrix}X-YZ^{-1}Y^* & 0 \\ 0 &  Z^{-1}\end{pmatrix}$$ via the matrix $$M =\begin{pmatrix} I & -YZ^{-1} \\ 0 & Z^{-1}\end{pmatrix}$$ As congruent matrices have the same signature and $\sign(Q) = 2\sigma_L$, it remains to show that $$\sign(X-YZ^{-1}Y^*) = \sign(\tau_D)$$ 

We show that $X-YZ^{-1}Y^*$ is exactly equal to $\tau_D$ under an appropriate scaling of the generators. Make the following computation $$(YZ^{-1}Y^*)_{i,j} = \sum_{k, \ell} Y_{i,k}(Z^{-1})_{k, \ell}(Y^*)_{\ell, j} = \sum_{k} Y_{i, k}Z_{k,k}^{-1}\overline{Y_{j, k}}$$ and notice that it's only possible for both $Y_{i,k}$ and $Y_{j, k}$ to be nonzero if faces $i$ and $j$ are both adjacent to vertex $k$. Therefore $YZ^{-1}Y^*$ is a sum over vertices, where the contribution at each vertex is a matrix that is zero everywhere except in the $4 \times 4$ minor corresponding to the four adjacent faces of the vertex. The same is then true for $X-YZ^{-1}Y^*$, and the $4 \times 4$ minor at each vertex is given by doing the corresponding matrix operations to the local contribution to $Q$. We show the explicit computation for this $4\times 4$ minor; the result is in figure ~\ref{fig:final}.

Writing the contribution to $A$ from figure ~\ref{fig:link} heuristically as $$\begin{array}{c||c|c}
\lk & f & \vtex\\
\hline
\hline
i^-(f) & \alpha & \beta\\
\hline
i^-(\vtex) & \gamma & \delta\end{array}$$ where $\alpha$ is a $4 \times 4$ block and $\delta$ is a $1\times 1$ block, and using the substitution $$s = 1-\om$$ we can write the local contribution to $Q=sA+\bar{s}A^T$ as follows: $$\begin{array}{c||c|c}
 & \bar{f} & \bar{\vtex}\\
\hline
\hline
f & s\alpha+s\alpha^T & s\beta+\bar{s}\gamma^T\\
\hline
c & s\gamma+\bar{s}\beta^T & (s+\bar{s})\delta\end{array}$$ The notation $\bar{\vtex}$ and $\bar{f}$ in the top row reminds us that when viewed as a Hermitian form, $Q$ is conjugate linear in the second component. The local contribution to $X-YZ^{-1}Y^*$ is therefore \begin{align*}
    (s\alpha+\bar{s}\alpha^T) - (s\beta +\bar{s}\gamma^T)((s+\bar{s})\delta)^{-1}(s\gamma+\bar{s}\beta^T)
\end{align*}
which, noting that $s+\bar{s} = s\bar{s}$ and $s\bom = -\bar{s}$, simplifies to $$s\alpha+\bar{s}\alpha^T - (\beta - \bom \gamma^T)\delta^{-1}(\beta^T -\om \gamma)$$ Using the values for $\alpha, \beta, \gamma,$ and $\delta$ from figure ~\ref{fig:link}, this is
\begin{figure}[H]\centering $$\left[\begin{array}{c||cccc}(+)&\bar{f_a} & \bar{f_b} & \bar{f_c} & \bar{f_d} \\
\hline
\hline
&&&&\\[-0.3cm]
f_a& \frac{\om+\bom}{2}& -\frac{1+\bom}{2} & 1 & -\frac{1+\om}{2} \\
&&&&\\[-0.3cm]
f_b&-\frac{1+\om}{2} & 1 & -\frac{1+\om}{2} & \om \\
&&&&\\[-0.3cm]
f_c& 1 &-\frac{1+\bom}{2} &\frac{\om+\bom}{2} & -\frac{1+\om}{2}\\
&&&&\\[-0.3cm]
f_d&-\frac{1+\bom}{2}  & -\bom & -\frac{1+\bom}{2}  & 1\end{array} \right] \hspace*{1cm} \left[\begin{array}{c||cccc}(-)&\bar{f_a} & \bar{f_b} & \bar{f_c} & \bar{f_d} \\
\hline
\hline
&&&&\\[-0.3cm]
f_a& -\frac{\om+\bom}{2} & \frac{1+\bom}{2} & -1 & 1+\om\\
&&&&\\[-0.3cm]
f_b & \frac{1+\om}{2} & -1 & \frac{1+\om}{2} & -\om \\
&&&&\\[-0.3cm]
f_c & -1 & \frac{1+\bom}{2} & -\frac{\om+\bom}{2} & \frac{1+\om}{2}\\
&&&&\\[-0.3cm]
f_d & \frac{1+\bom}{2} & \bom & \frac{1+\bom}{2} & -1 \end{array} \right]$$
\caption{The local contribution to $X-YZ^{-1}Y^*$ of a positive (left) and negative (right) vertex}
\label{fig:final}
\end{figure}

The local contributions to $X-YZ^{-1}Y^*$ in figure ~\ref{fig:final} become the local contributions to $\tau_D$ in figure ~\ref{fig:kash} after scaling the faces in the following way. Let the winding of a face in $D$ be the winding number of $D$ around any point in that face. Extend the coefficient ring to $\R$, and for a face $f$ with winding $k$, use the generator $(-\bsom)^kf$ in place of $f$. This multiplies each row of $X-YZ^{-1}Y^*$ by some $(-\bsom)^k$ and the corresponding column  by the complex conjugate $(-\som)^{k} = (-\bsom)^{-k}$. The matrices in figure ~\ref{fig:final} become exactly the ones in figure ~\ref{fig:kash} under the identification $2x = \som+\bsom$, so $X-YZ^{-1}Y^*$ becomes $\tau_D$. Since this scaling preserves the signature, $\sign(X-YZ^{-1}Y^*) = \sign(\tau_D)$. \\
\end{proof}

\begin{remark} Note that theorem ~\ref{thm:kash} does not give an interpretation for Kashaev's invariant when $|x|>1$. In particular, we do not know if the Kashaev invariant for $|x|>1$ also correspond to values of $\sigma_L$, or if so which ones.\\
\end{remark}

\section{The Alexander polynomial and other consequences}

In this section we use Kashaev's matrix $\tau_D$ to give another formula for $\Delta_L$ and provide explicit formulas for the kernel of $\tau_D$. The formula for the Alexander polynomial appears in ~\cite{liv}, was independently noticed by Bar-Natan, and was the inspiration behind the main result of this paper. We give an alternate proof for this formula in corollary ~\ref{cor:alex}.

\begin{cor} \label{cor:alex} Let $f_i$ and $f_j$ be two faces in $D$ that share an edge, and let $\krem$ be the Kashaev matrix $\tau_D$ with the two columns and rows corresponding to $f_i$ and $f_j$ removed. The Alexander polynomial $\Delta_L$ is given by $$\Delta_L(t)^2 = \det(\krem)$$ where $2x = t^{1/2} + t^{-1/2}$. Furthermore, while the Alexander polynomial is only defined only up to multiplication by $\pm t$, the equality above is (up to a sign) a proper equality if $\Delta_L$ is taken to be the Conway-normalized Alexander polynomial.
\end{cor}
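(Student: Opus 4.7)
The plan is to follow the argument of Theorem~\ref{thm:kash}, with two substitutions. First, replace the Hermitian form $(1-\omega)A+(1-\bom)A^T$ by the matrix $tA-A^T$, whose determinant computes the Alexander polynomial. Second, because $\Delta_{L \sqcup -L}(t)=0$ but $\Delta_{L\# -L}(t)=\Delta_L(t)^2$, apply the construction to the connect sum $L\# -L$ rather than to $L\sqcup -L$; this is what produces the $\Delta_L(t)^2$ on the right-hand side. The two deleted faces $f_i$ and $f_j$ should correspond to the edge $e$ of $D$ where the modification from $L\sqcup -L$ to $L\# -L$ is performed.

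The first step is to describe a ``slight modification'' of the Seifert-surface construction of Theorem~\ref{thm:kash}: near the edge $e$, reroute the two parallel $L$- and $-L$-strands so as to fuse them into a single strand, producing a diagram for $L\# -L$ and a corresponding Seifert surface $\Sigma'$. I would then argue that $H_1(\Sigma')$ is generated by the vertex curves of $D$ together with the face curves of $D$ other than $f_i$ and $f_j$, and that the local linking computations of Figure~\ref{fig:link} are unaffected at every vertex, so that the Seifert matrix $\tilde A$ of $\Sigma'$ in this generating set is obtained from $A$ by simply deleting the two rows and columns indexed by $f_i$ and $f_j$.

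Next, apply the Schur-complement computation from Theorem~\ref{thm:kash} to $\tilde T = t\tilde A - \tilde A^T$. Its vertex block $\tilde U$ is diagonal with entries $1-t$ and $t-1$ at positive and negative vertices respectively, hence invertible, with $\det\tilde U = \pm(1-t)^{|V|}$. Block-diagonalization then gives
\[
\det \tilde T \;=\; \det\tilde U \,\cdot\, \det\bigl(\tilde X - \tilde Y\tilde U^{-1}\tilde W\bigr).
\]
A local computation parallel to the one leading to Figure~\ref{fig:final}, but with $tA-A^T$ in place of $sA+\bar sA^T$, should show that after the winding-number rescaling of each face and the identification $2x=\som+\bsom$, the Schur complement $\tilde X-\tilde Y\tilde U^{-1}\tilde W$ coincides entry-wise with $\krem$. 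The row and column rescalings are reciprocal, so they cancel in the determinant, giving $\det\krem=\det\tilde T/\det\tilde U$. Since $\tilde A$ is a Seifert matrix for $L\# -L$, $\det\tilde T$ recovers $\Delta_{L\# -L}(t) = \Delta_L(t)^2$ up to a unit in $\mathbb{Z}[t^{\pm1}]$; combined with the $(1-t)^{|V|}$ factor from $\det\tilde U$ and the $t\leftrightarrow t^{-1}$ symmetry of $\krem$ inherited from $2x=t^{1/2}+t^{-1/2}$, together with the Conway-normalization symmetry of $\Delta_L$, one obtains the sharp equality $\det\krem=\Delta_L(t)^2$.

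The main obstacle is the geometric step: specifying the modification at $e$ and verifying that it yields a Seifert surface for $L\# -L$ whose Seifert matrix is exactly $A$ with the rows and columns for $f_i$ and $f_j$ deleted. Once that is in place, the algebraic Schur-complement step is essentially routine, and the only remaining subtlety is the bookkeeping of powers of $t$ and $(1-t)$ needed to upgrade an equality up to a unit $\pm t^k$ to the sharp equality required by the Conway normalization.
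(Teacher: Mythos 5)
Your proposal follows the paper's proof essentially step for step: the same connect-sum surface built along the shared edge of $f_i$ and $f_j$, the same observation that the remaining face and vertex curves give a basis of $H_1(\Sigma_{\#})$ with Seifert matrix $A$ minus the two rows and columns, and the same Schur-complement and determinant bookkeeping, ending with the Conway normalization. One correction to a concrete claim: the Schur complement of $t\tilde A-\tilde A^T$ does \emph{not} coincide entry-wise with $\krem$ after the winding rescaling; the paper carries out the entry-wise identification for $Q=(1-t)A+(1-t^{-1})A^T=\frac{1-t}{t}(tA-A^T)$, and since a Schur complement scales linearly with the ambient matrix, yours equals $\frac{t}{1-t}\,\krem$. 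This only shifts the determinant by $(t/(1-t))^{|V_D|}$ (the face block is $|V_D|\times|V_D|$ for connected $D$), which cancels against $\det\tilde U=\pm(1-t)^{|V_D|}$ up to the power of $t$ absorbed by $t^{1/2}\tilde A-t^{-1/2}\tilde A^T=t^{-1/2}(t\tilde A-\tilde A^T)$ --- exactly the bookkeeping you flagged --- so the argument goes through.
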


\begin{proof}
Consider any connect sum $L \# -L$ in place of $L \sqcup -L$. Use the same procedure as in the proof of theorem ~\ref{thm:kash} to construct a diagram and Seifert surface $\Sigma_{\#}$ for $L \# -L$, adding in the connect sum along the shared edge of $f_i$ and $f_k$.  

\begin{figure}[H]
\centering
\begin{picture}(250, 95)
\put(-40, 0){\includegraphics[height = 3.3cm]{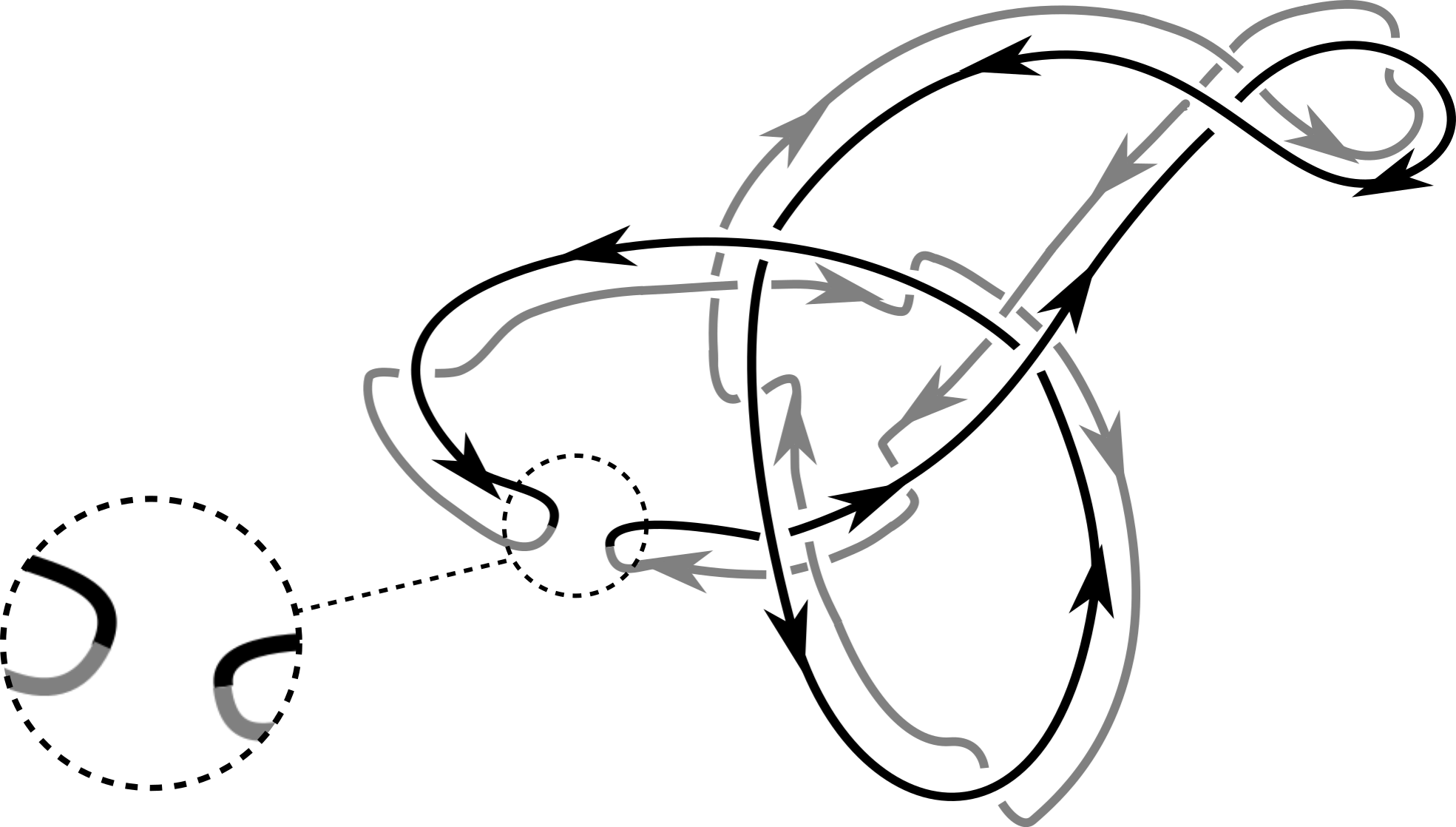}}
\put(27,47){$f_i$}
\put(23,13){$f_j$}
\put(150, 0){\includegraphics[height=3.3cm]{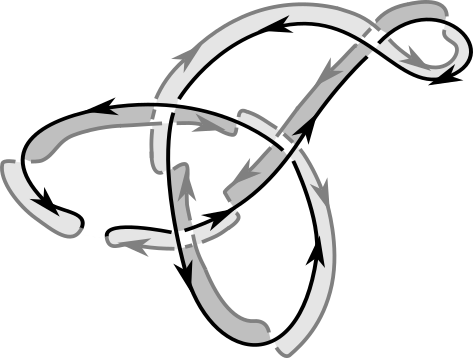}}
\put(177,47){$f_i$}
\put(173,13){$f_j$}
\end{picture}
\caption{A diagram and Seifert surface for $L \#-L$ with the connect sum along the shared edge of $f_i$ and $f_j$}
\end{figure}
Note that the faces and vertices of $D$ with $f_i$ and $f_j$ removed form a linearly independent basis for $H_1(\Sigma_{\#})$. Let $A$ be a Seifert matrix for $\Sigma_{\#}$ with respect to this basis. Note that $A$ is simply the Seifert matrix for $L \sqcup -L$ from the proof of ~\ref{thm:kash}, but with the two rows and columns corresponding to $f_i$ and $f_j$ removed.
Now consider the matrix $$Q = (1-t)A +(1-t^{-1})A^T$$ over the involutive ring $\mb{Z}[t^{\pm 1}]$ with involution given by $t\mapsto t^{-1}$. As in the proof of theorem ~\ref{thm:kash}, $Q$ is a block matrix of the following form $$Q=\begin{array}{c||c|c}&\text{faces except }  \{f_i, f_j\} & \text{vertices}\\\hline
\hline
\text{faces except }   \{f_i, f_j\} & X & Y \\
\hline
\text{vertices} & Y^* & Z
\end{array}$$ 
where the notation $Y^*$ denotes the transpose of $Y$ under the involution $t \mapsto t^{-1}$ so that when $t$ is a unit complex number, $Y^*$ is the usual conjugate transpose. Notice that $$MQM^* = \begin{pmatrix}X-YZ^{-1}Y^* & 0 \\ 0 &  Z^{-1}\end{pmatrix}$$ where $M =\begin{pmatrix} I & -YZ^{-1} \\ 0 & Z^{-1}\end{pmatrix}$ and recall that $Z$ is diagonal with $-((1-t)+(1-t^{-1}))$ in the entries corresponding to positive vertices and $(1-t)+(1-t^{-1})$ in the entries corresponding to negative vertices. Therefore \begin{align*}
    \det(Q) &= \det(X-YZ^{-1}Y^*)\det(Z^{-1})\det(M)^{-1}\det(M^*)^{-1}\\
    &=\det(X-YZ^{-1}Y^*)\det(Z)\\
    &=(-1)^p((1-t)+(1-t^{-1}))^{|V_D|}\det(X-YZ^{-1}Y^*)
\end{align*} where $p$ is the number of positive vertices in $D$ and $|V_D|$ is the total number of vertices in $D$. To change $X-YZ^{-1}Y^*$ into $\krem$ we multiply each row by some $(-t^{1/2})^{k}$ and the corresponding the column by $(-t^{-1/2})^{-k}$ so the determinant is preserved. We can also rewrite $(1-t) + (1-t^{-1})$ as $-(t^{1/2} + t^{-1/2})^2$, so we get $$\det(Q) = (-1)^{p+|V_D|}(t^{1/2} + t^{-1/2})^{2|V_D|} \det(\tau_D')$$ Finally, notice that $$Q = (t^{-1/2} + t^{1/2})(t^{1/2} A - t^{-1/2} A^T)$$ and use the Conway-Alexander formula for $\Delta_{L \# - L}$ to get \begin{align*}
\Delta_{L \# - L}(t) &= \det(t^{1/2} A -t^{-1/2} A^T)\\
&= (t^{-1/2}-t^{1/2})^{-2|V_D|} \det(Q)\\
&=(-1)^{p+|V_D|}\det(\krem)
\end{align*} 
Since $\Delta_{L \# -L} = \Delta_L^2$, this concludes the proof.\\
\end{proof}

\begin{remark}
We used an explicit formula for the Conway-Alexander polynomial in corollary ~\ref{cor:alex} to show that it is equal to $\det(\krem)$, but showing that $\det(\krem)$ gives \emph{any} Alexander polynomial would be enough to show that it gives the Conway-normalized one -- since $\krem$ is a real matrix when $t \in S^1$, it must be symmetric under $t \mapsto t^{-1}$.
\end{remark}

\begin{remark}The proof of corollary ~\ref{cor:alex} allows us to describe $\krem$ as a presentation matrix of the Alexander module of $L$ over the localized ring $\mb{Z}[t^{\pm 1/2}, (1-t)^{-1}]$ under the substitution $2x = t^{1/2}+t^{-1/2}$. If $L$ is a knot, multiplication by $1-t$ is invertible in the Alexander module, so $\krem$ also presents the Alexander module over $\mb{Z}[t^{\pm 1/2}]$. Note that this does \emph{not} describe the $\mb{Z}[2x]$-module that $\tau_D$ presents, or even show that such a module is an invariant of links. 
\end{remark}

We conclude with explicit formulas for the kernel of the Kashaev matrix $\tau_D$. We can view $\tau_D$ either as a linear map or as a symmetric bilinear form $\langle \cdot, \cdot \rangle_D$ on the $\mb{Z}[2x]$-module freely generated by the faces $F_D$ of $D$, where $\langle f_i, f_j \rangle = (\tau_D)_{i, j}$. The kernel of $\tau_D$ as a linear map is the same as its kernel as a symmetric bilinear form. The next corollaries give explicit formulas for this kernel.

\begin{cor} \label{cor:ker} The kernel of $\tau_D$ contains the 2-dimensional submodule generated by $$\sum_{f \in F_D} a_{w(f)}f$$ where $w(f)$ is the number of times the diagram winds around a point in the face $f$, and the coefficients $a_n$ are solutions to the recurrence relation $a_n + 2xa_{n+1}+a_{n+2} = 0$. If we use the identification  $2x = t^{-1/2} + t^{-1/2}$ and consider $\tau_D$ over the field $\mb{Q}(t^{1/2})$, then this subspace is the entire kernel if $\Delta_L(t) \neq 0$. Furthermore, the solutions to this recurrence are given explicitly by $$a_n = c_1(-t^{1/2})^n + c_2(-t^{1/2})^n$$ for constants $c_1, c_2 \in \mb{Q}(t^{1/2})$.
\end{cor}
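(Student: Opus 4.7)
\emph{Closed form of $a_n$.} The recurrence $a_n + 2xa_{n+1} + a_{n+2} = 0$ has characteristic polynomial $X^2 + 2xX + 1$, which under the identification $2x = t^{1/2}+t^{-1/2}$ factors as $(X+t^{1/2})(X+t^{-1/2})$. The roots are therefore $-t^{\pm 1/2}$, so $a_n = c_1(-t^{1/2})^n + c_2(-t^{-1/2})^n$; the two fundamental solutions are linearly independent over $\mb{Q}(t^{1/2})$ whenever $t \neq 1$.

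\emph{Kernel containment.} To show that $\mathbf{v}_\pm := \sum_f (-t^{\pm 1/2})^{w(f)} f$ both annihilate $\tau_D$, I would appeal to the identification established in the proof of Theorem \ref{thm:kash}: after the diagonal scaling $f \mapsto (-\bsom)^{w(f)} f$ (with $2x = \som + \bsom$), the matrix $\tau_D$ equals the Schur complement $X - YZ^{-1}Y^*$ of $Q = (1-\om)A + (1-\bom)A^T$ along the vertex block. Hence $\mathbf{v} \in \ker\tau_D$ iff the rescaled face vector extends to an element of $\ker Q$ by adjoining a vertex component. I would then exhibit two null vectors of $Q$. The first arises from the unique relation among the generators $\{\gamma_f, \delta_v\}$ of $H_1(\Sigma)$: Euler's formula for the 4-valent planar graph $D$ on $S^2$ gives $|F_D|+|V_D|-\mathrm{rank}\,H_1(\Sigma) = (|V_D|+2)+|V_D|-(2|V_D|+1) = 1$, and a local inspection at each crossing identifies this relation as $\sum_f \gamma_f = \sum_v c_v \delta_v$ for signs $c_v \in \{\pm 1\}$, producing a vector in $\ker A \cap \ker A^T \subset \ker Q$ whose face block is $\mathbf{1}$. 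The second arises because $L \sqcup -L$ is a split link (so $\Delta_{L\sqcup-L}=0$ and $Q$ is singular for every $\om$); a direct analysis via a splitting sphere and an $\om$-twisted cycle class on $\Sigma$ identifies its face block as $(\om^{w(f)})_f$. Undoing the scaling converts these into $\mathbf{v}_-$ and $\mathbf{v}_+$ in $\ker \tau_D$ respectively.

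\emph{Nullity upper bound and main obstacle.} Corollary \ref{cor:alex} gives $\det(\krem) = \Delta_L(t)^2 \neq 0$ in $\mb{Q}(t^{1/2})$ by hypothesis; since $\krem$ is a $(|F_D|-2)\times(|F_D|-2)$ principal minor of $\tau_D$, this forces $\mathrm{rank}\,\tau_D \geq |F_D|-2$ and hence $\dim\ker\tau_D \leq 2$. Combined with the containment step, the kernel equals exactly the claimed 2-dimensional span. The most delicate point of the plan is pinning down the second null vector of $Q$ --- the one stemming from splittability whose face block must be shown to equal $(\om^{w(f)})_f$ --- since the first is essentially forced by the Euler count. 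A cleaner purely computational alternative is to verify $\tau_D \mathbf{v}_\pm = 0$ entry-by-entry using the local form of $\tau_v$ from Figure \ref{fig:kash}: the winding pattern around each crossing together with the recurrence collapses each vertex's contribution to $(\tau_D \mathbf{v})_{f_0}$ into a scalar multiple of $a_{w(f_0)}$, reducing the claim to a combinatorial identity $\sum_{v \ni f_0} \pm \epsilon(v) = 0$ whose signs depend only on whether $f_0$ sits in a ``diagonal'' or ``off-diagonal'' position at each adjacent crossing.
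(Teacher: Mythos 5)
Your closed form for $a_n$ and your nullity bound are exactly the paper's: the characteristic roots $-t^{\pm 1/2}$ give the explicit solutions, and the nonvanishing of $\det(\krem)=\Delta_L^2$ from Corollary~\ref{cor:alex} forces $\operatorname{rank}\tau_D\geq |F_D|-2$, hence $\dim\ker\tau_D\leq 2$. The gap is in the containment step, which is the heart of the corollary. Your primary route --- lifting the problem through the Schur complement to two null vectors of $Q$, one from the Euler relation among the generators of $H_1(\Sigma)$ and one from the splittability of $L\sqcup -L$ --- is a plausible sketch, but the second null vector (the one whose face block you need to identify as $(\om^{w(f)})_f$) is never actually constructed; you flag it yourself as the delicate point, and without it the argument only produces a $1$-dimensional subspace of the kernel.

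Your fallback ``purely computational alternative'' is in fact the paper's proof, but you misdescribe how it closes, and the misdescription matters. The paper checks that \emph{each individual vertex's} contribution to $\langle g, f_0\rangle_D$ already vanishes: if $f_0$ sits in position $f_b$ with $w(f_0)=k$ then the adjacent windings are $k-1,k-1,k-2$ and the contribution is $\epsilon(v)(a_k+2xa_{k-1}+a_{k-2})=0$ by the recurrence, while in position $f_a$ it is $\epsilon(v)\bigl((2x^2-1)a_k+a_k+x(a_{k+1}+a_{k-1})\bigr)=\epsilon(v)(2x^2a_k-2x^2a_k)=0$, and similarly for $f_c, f_d$. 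There is no residual global identity to verify. The identity you propose to reduce to, $\sum_{v\ni f_0}\pm\epsilon(v)=0$, is false in general --- for the standard all-positive trefoil diagram a face adjacent to all three crossings in off-diagonal position would give $\pm 3$ --- so if the computation genuinely reduced to it, the corollary would be false. Carrying out the entry-by-entry check you gesture at would reveal the local cancellation and repair this, but as written the plan terminates in a step that does not hold.
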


\begin{proof}We first verify that $g = \sum_{f \in F_D} a_{w(f)}f$ lives in the kernel of the symmetric bilinear form $\langle \cdot, \cdot\rangle_D$ represented by $\tau_D$. Consider $\langle g, f\rangle_D$ for an arbitrary face $f$, and recall the local contributions in the definition of $\tau_D$ from figure ~\ref{fig:kash}, copied below.

\begin{figure}[H]
\begin{picture}(100,60)
\put(-50,0){\includegraphics[width = 2cm]{vcross.png}}
\put(-27,5){$f_c$}
\put(-27, 50){$f_a$}
\put(-50, 27){$f_b$}
\put(-5, 27){$f_d$}
\put(50, 30){$\begin{array}{c|cccc} & f_a & f_b & f_c & f_d\\
\hline
f_a&2x^2-1 & x & 1 & x\\
f_b&x&1&x&1\\
f_c&1&x&2x^2-1&x\\
f_d&x&1&x&1\end{array}$}
\end{picture}
\end{figure} 

If $f$ appears as $f_b$ in the diagram above near some vertex and $w(f)=k$, then $w(f_a)=w(f_c) = k-1$ and $w(f_d) = k-2$, so the contribution to $\langle g, f\rangle_D$ of this vertex is \begin{align*}&\, a_k\langle f_b, f_b\rangle_D+a_{k-1}\langle f_b, f_a\rangle_D+a_{k-1}\langle f_b, f_c\rangle_D+a_{k-2}\langle f_b, f_d\rangle_D \\
= & \, a_k + 2xa_{k-1}+a_{k-2}\\
= & \,0\end{align*}
A similar computation shows that the contribution is also zero when $f$ appears in position $f_a, f_c,$ or $f_d$, hence $g$ is in the kernel of $\tau_D$. The submodule is 2-dimensional since the solution space to $a_n + 2xa_{n+1}+a_{n+2} = 0$ is 2-dimensional. The proof of ~\ref{cor:alex} shows that the kernel of $\tau_D$ is also 2-dimensional when $\Delta_L \neq 0$, so over a field, this is the entire kernel. It is straightforward to verify that the explicit form of $a_n$ solves the recurrence.\\
\end{proof}

If $D$ is disconnected then the Alexander polynomial is always zero and the kernel is larger. We can extend corollary ~\ref{cor:ker} to give a more general result.

\begin{cor}Let $D$ be a diagram with $n$ connected components $D_1, \cdots, D_n$. Let $f_0$ be the exterior face, and let $F_{D_i}'$ be the set of \emph{interior} faces of $D_i$. The kernel of $\tau_D$ contains the $n+1$-dimensional subspace generated by $$a_0f_0+\sum_{i=1}^n\sum_{f \in F_{D_i}'} a_{i, w(f)}f$$ where each sequence $\{a_{i, k}\}_{k \in \mb{Z}}$ satisfies the recurrence relation $a_{i, k} + 2xa_{i, k+1}+a_{i, k+2}=0$ with the condition $a_{i,0} = a_0$ for all $i$. If none of the Alexander polynomials $\Delta_{L_i}(t)$ are zero, where $L_i$ is the link represented by $D_i$, then over the field $\mb{Q}(t^{1/2})$ this is the entire kernel.
\end{cor}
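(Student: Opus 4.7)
The plan is to reduce to corollary~\ref{cor:ker} via a block decomposition of $\tau_D$ coming from the connected components of $D$. First I would verify that any vector $g = a_0 f_0 + \sum_i \sum_{f \in F'_{D_i}} a_{i, w(f)} f$ of the stated form lies in $\ker(\tau_D)$ by a vertex-by-vertex computation, analogous to that in the proof of corollary~\ref{cor:ker}. Each vertex $v$ of $D$ belongs to exactly one component $D_i$, so its four adjacent faces all lie in $F'_{D_i} \cup \{f_0\}$ and exhibit the same $0, 1, 2$ pattern of winding-number differences as in that proof. The local contribution of $v$ to $\langle g, f \rangle_D$ therefore collapses to an expression of the form $a_{i,k} + 2x\, a_{i,k+1} + a_{i,k+2}$, which vanishes by the recurrence. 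The constraint $a_{i,0} = a_0$ is precisely what allows the recurrence to be applied consistently when the face $f_0$ (of winding $0$) appears among the faces around $v$. A parameter count then shows the described subspace has dimension $n+1$: the parameters are the common value $a_0$ together with one additional initial value $a_{i,1}$ per component.

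For the reverse inclusion, the key observation is that $\tau_D$ has a block structure. Since no vertex of $D_i$ is adjacent to any face of $F'_{D_j}$ for $j \neq i$, after reordering the rows and columns of $\tau_D$ as $F'_{D_1} \mid F'_{D_2} \mid \cdots \mid F'_{D_n} \mid \{f_0\}$, the off-diagonal blocks between different $F'_{D_i}$'s vanish and $\tau_D$ takes the form
$$\tau_D = \begin{pmatrix} A_1 & & & B_1 \\ & \ddots & & \vdots \\ & & A_n & B_n \\ B_1^T & \cdots & B_n^T & C \end{pmatrix},$$
where, crucially, each principal submatrix indexed by $F'_{D_i} \cup \{f_0\}$ is equal to $\tau_{D_i}$ exactly, because each vertex's contribution to $\tau_D$ depends only on its local neighbourhood; and $C = \sum_i C_i$, where $C_i$ is the contribution to the $(f_0, f_0)$-entry from vertices in $D_i$.

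Writing a candidate kernel vector as $v = (v_1, \ldots, v_n, v_0)$, the equation $\tau_D v = 0$ unpacks as the $n$ local conditions $A_i v_i + B_i v_0 = 0$ together with the global condition $\sum_i B_i^T v_i + C v_0 = 0$; the latter is automatic once the former hold, since each $(v_i, v_0) \in \ker(\tau_{D_i})$ forces $B_i^T v_i + C_i v_0 = 0$, and summing using $C = \sum_i C_i$ yields the global condition. Hence $\ker(\tau_D)$ is the fibered product of the $\ker(\tau_{D_i})$ over evaluation at the $f_0$-coordinate. When every $\Delta_{L_i}(t) \neq 0$, corollary~\ref{cor:ker} gives $\dim \ker(\tau_{D_i}) = 2$ with the $f_0$-evaluation surjective (free choice of $a_{i,0}$), so each fiber has dimension $1$ and $\dim \ker(\tau_D) = 1 + n = n + 1$. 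The main technical obstacle I anticipate is verifying the block decomposition cleanly — particularly confirming the identification of each principal submatrix with $\tau_{D_i}$ and handling possible nesting of components — after which the result follows directly from corollary~\ref{cor:ker}.
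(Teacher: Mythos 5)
Your overall strategy for the second half differs genuinely from the paper's: the paper bounds the kernel dimension by deleting $n+1$ rows and columns of $\tau_D$ (one face per component adjacent to $f_0$, plus $f_0$ itself) and showing the resulting matrix $\krem$ has nonzero determinant $\pm\prod_i\Delta_{L_i}^2$ via the Seifert-matrix computation of corollary \ref{cor:alex}, whereas you decompose $\tau_D$ into blocks along the components and try to identify $\ker(\tau_D)$ as a fibered product of the $\ker(\tau_{D_i})$ over the $f_0$-coordinate. Your first half (membership in the kernel and the $n+1$ parameter count) matches the paper. The block structure you describe is essentially right, modulo the wording: the principal submatrix of $\tau_D$ on $F'_{D_i}\cup\{f_0\}$ is not literally $\tau_{D_i}$, since its $(f_0,f_0)$ entry is $\sum_j C_j$ rather than $C_i$; your own notation already accounts for this.

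The genuine gap is in the sentence claiming the global condition $\sum_i B_i^T v_i + C v_0 = 0$ is ``automatic once the former hold, since each $(v_i,v_0)\in\ker(\tau_{D_i})$ forces $B_i^T v_i + C_i v_0 = 0$.'' That justification is circular: from $\tau_D v = 0$ you only know $A_i v_i + B_i v_0 = 0$ for each $i$ (plus the single summed $f_0$-equation), and you are assuming the conclusion $(v_i,v_0)\in\ker(\tau_{D_i})$ in order to derive the individual equations $B_i^T v_i + C_i v_0 = 0$. For a general symmetric matrix this implication fails --- deleting a row can strictly enlarge the kernel (e.g.\ $\bigl(\begin{smallmatrix}0&0\\0&1\end{smallmatrix}\bigr)$) --- so without it your ``fibered product'' identification only gives the inclusion $\supseteq$, which is the containment you already have, not the upper bound $\dim\ker(\tau_D)\le n+1$ that you need. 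The gap is fixable: over a field, the $j$-th row of a symmetric matrix $S$ lies in the span of the remaining rows if and only if $\ker(S)$ contains a vector with nonzero $j$-th coordinate, and corollary \ref{cor:ker} supplies a kernel vector of $\tau_{D_i}$ with $f_0$-coordinate $a_0=2\neq 0$ (take $a_n=(-t^{1/2})^n+(-t^{-1/2})^n$). Hence the $f_0$-row of $\tau_{D_i}$ is a combination of the others, $\ker\bigl(A_i\mid B_i\bigr)=\ker(\tau_{D_i})$, and your argument goes through. You should also state explicitly the standing assumption that no component is nested inside a face of another, which is what makes $F_D=\{f_0\}\sqcup\bigsqcup_i F'_{D_i}$ and makes every vertex of $D_i$ adjacent only to faces of $F'_{D_i}\cup\{f_0\}$; the corollary's statement implicitly requires this as well.
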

\begin{proof} The computations in the proof of corollary ~\ref{cor:ker}, along with the observation that interior faces of different diagrams don't share vertices, show that the subspace is indeed in the kernel. The solution space to the recurrence is $n+1$ dimensional, so it remains to show that the kernel of $\tau_D$ has dimension $n+1$ when all the $\Delta_{L_i}$ are nonzero. Using a similar procedure as in the proof of corollary ~\ref{cor:alex}, pick one face in each $F_{D_i}'$ that shares an edge with $f_0$, and let $\krem$ be $\tau_D$ with the $n+1$ rows and columns corresponding to these faces and $f_0$ removed. Let $A_i$ be a Seifert matrix for $L_i \# -L_i$ so that the block diagonal matrix with $tA_i-A_i^T$ in each block has determinant $\prod_{i=1}^n \Delta_{L_i}$. As in the proof of ~\ref{cor:alex}, we can get to $\krem$ from this block diagonal matrix without changing the determinant, so if each $\Delta_{L_i}$ is nonzero, $\krem$ has full rank and the kernel of $\tau_D$ has dimension $n+1$.\\
\end{proof}

\bibliographystyle{alpha}
\bibliography{main.bib}{}

\end{document}